\theoremstyle{definition}
\newtheorem{definition}{Definition}[section]
\newtheorem{ex}[definition]{Example}
\newtheorem{rem}[definition]{Remark}
\theoremstyle{plain}
\newtheorem{prop}[definition]{Proposition}
\newtheorem{lem}[definition]{Lemma}
\newtheorem{coro}[definition]{Corollary}
\newtheorem{teo}[definition]{Theorem}
\newfont{\bbb}{msbm10 scaled\magstephalf}     
\def\C{\mathbb C}
\def\K{\mathbb K}
\def\R{\mathbb R}
\def\R{\mbox{\bbb R}}
\def\O{\mathcal O}
\def\m{\mathfrak{m}}
\title[Bi-Lipschitz triviality]{Bi-Lipschitz triviality of function-germs on singular varieties}
\author{R. Oset Sinha, M. A. S. Ruas}
\date{}
\address{Departament de Matem\`atiques,
Universitat de Val\`encia, Campus de Burjassot, 46100 Burjassot,
Spain}
\email{raul.oset@uv.es}
\address{Instituto de Ci\^encias Matem\'aticas e da Computa\c{c}\~ao,
Universidade de S\~ao Paulo, S\~ao Carlos, SP,
Brazil}
\email{maasruas@icmc.usp.br}
\thanks{Work of R. Oset Sinha partially supported by Grant PID2021-124577NB-I00 funded by MCIN/AEI/ 10.13039/501100011033 and by ``ERDF A way of making Europe"}
\thanks{Work of M. A. S. Ruas partially supported by FAPESP - Grant 2019/21181-0 and by CNPq Grant 305695/2019- 3.}
\subjclass[2000]{Primary 14B05; Secondary 58K40, 32S25} \keywords{bi-Lipschitz classification, complex hypersurface singularity, Bruce-Roberts-Milnor number}
\begin{document}
\begin{abstract}
In this paper we study the bi-Lipschitz triviality of deformations of an analytic function germ $f$ defined on a germ of an analytic variety $(X, 0)$ in $\mathbb C^n$. We introduce the notion of strongly rational $\mathscr R_X$-bi-Lipschitz trivial families and give an infinitesimal criterion which is a sufficient condition for the bi-Lipschitz triviality of deformations of $f$ on $(X,0).$ As a corollary it follows that when $X$ and $f$ are homogeneous of the same degree, all deformation of $f$ of the same or higher degrees are bi-Lipschitz trivial. We then prove a rigidity result for deformations of $f$ on $X$ when both are weighted homogeneous with respect to the same set of weights.

\end{abstract}

\maketitle

\section{Introduction}

Studying triviality of families of complex function germs $f:(\C^n,0)\to(\C,0)$ for different equivalence relations has been of interest ever since Whitney proved the existence of moduli in the analytic classification of 4 lines in the plane. In the last 20 years many researchers have been attracted by the bi-Lipschitz classification of function germs, which lies somewhere in between the topological and the analytic classification. This is mainly due to the paper by Henry and Parusinski \cite{henryparusinski}, where they give an example of non-bi-Lipschitz-trivial family of function germs and define an invariant of bi-Lipschitz equivalence of function germs in $\C^2$. Since then, giving sufficient conditions for bi-Lipschitz triviality or understanding when the bi-Lipschitz classification is rigid (i.e. when bi-Lipschitz triviality implies analytic triviality) is a cherished goal.

On the other hand, another topic which has gained great impulse in recent years is the study of functions on singular varieties (see \cite{biviakostasruas,limaoreficenunotomazella,oreficenunotomazella}, for example). In this paper we combine both topics. We study the bi-Lipschitz triviality of deformations $F:(\C^n\times \C,0)\to(\C,0)$ of an analytic function germ $f(x)=F(x,0)$ defined on a germ at the origin of an analytic variety $(X, 0)$ in $\mathbb C^n$. For functions with isolated singularity on $X,$ the Bruce-Roberts number $\mu_{BR}(f,X)$ is finite, and
this condition is equivalent to the finite determinacy of $f$ with respect to the group $\mathscr R_X$ of diffeomorphisms
in $(\mathbb C^n, 0),$ preserving $(X,0).$ When $X$ and $f$ are weighted homogeneous with respect to the same set of weights,
and $\mu_{BR}(f,X)$ is finite, it follows from Damon's result in \cite{Drv} that deformations of $f$ by terms of non-negative weights are topologically trivial. In Section \ref{trivial} we address the question of bi-Lipschitz equisingularity of these families for which we introduce the notion of strongly rational $\mathscr R_X$-bi-Lipschitz-trivial families. Based on an extension to bi-Lipschitz equisingularity of an infinitesimal criterion given by the second author and Tomazella in \cite{ruastomazella} for the topological equisingularity of families of function germs defined on $X,$ we prove a sufficient condition for the strongly rational bi-Lipschitz
triviality of deformations of weighted homogeneous germs defined on $(X,0).$ As a corollary, when $X$ and $f$ are homogeneous, it follows that all deformation of $f$ of the same or higher degrees are bi-Lipschitz trivial.

We then address in Section \ref{rigid} the question of rigidity of weighted homogeneous (and not homogeneous) relative deformations of $f,$  extending partially to the relative case the results given by Fernandes and the second author in the non-relative case in \cite{fernandesruas}. More precisely, we prove that if $X$ and $f$ are weighted homogeneous with respect to the same set of weights, with certain hypotheses on the weights, then strongly rational $\mathscr R_X$-bi-Lipschitz equivalence implies $\mathscr R_X$ equivalence. To prove this we need a Thom-Levine type result for $\mathscr R_X$ equivalence, which we prove in Section \ref{sectionthomlevine}. We also show that reduction to the plane curve case is an invariant for the strongly rational $\mathscr R_X$-bi-Lipschitz-triviality of the pair $(F,X).$


\section{Preliminaries}

Denote by $\O_d$ the local ring of germs of smooth functions in $d$ variables over $\C$, and denote its maximal ideal by $\m_d$. Let $X\subset \C^n$ be a complex variety and let $\Theta_X$ be the $\O_n$-module of tangent vector fields to $X$. We say that two analytic function-germs $f,g:(\C^n,0)\to (\C,0)$ are $\mathscr R_{X}$-analytically equivalent if there exists a bi-analytic germ $\phi:(\C^n,0)\to (\C,0)$ which preserves $X$ such that $g=f\circ\phi$. Such bi-analytic germs are obtained by integrating analytic vector fields in $\Theta_X$.

Similarly, we say that two analytic function-germs $f,g:(\C^n,0)\to (\C,0)$ are $\mathscr R_{X}$-bi-Lipschitz equivalent if there exists a bi-Lipschitz germ $\phi:(\C^n,0)\to (\C,0)$ which preserves $X$ such that $g=f\circ\phi$.

We will need the following classical results

\begin{lem}\label{mather}{\rm \bf (Mather's Lemma)}
Let $\alpha:G\times M\to M$ be a smooth  action of a Lie group $G$ on a manifold $M$, and let $V$ be a connected
submanifold of $M$. Then $V$ is contained in a single orbit if and only if the following hold:

{\rm (a)} $T_vV\subseteq T_v(G.v), \forall v\in V$,

{\rm (b)} $\dim T_v(G_v)$ is independent of $v\in V$.
\end{lem}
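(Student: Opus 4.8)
The plan is to prove the two implications separately, with the forward direction being immediate and the converse carrying all the content. For the forward direction, suppose $V$ lies in a single orbit $G\cdot v_0$. Then for every $v\in V$ we have $G\cdot v=G\cdot v_0$, so all the orbits appearing are one and the same submanifold; condition (b) is immediate since the orbit dimension does not change, and condition (a) follows because $V\subseteq G\cdot v_0$ forces $T_vV\subseteq T_v(G\cdot v_0)=T_v(G\cdot v)$.

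For the converse, assume (a) and (b). I would realize tangent directions along $V$ by fundamental vector fields and integrate. Writing $\mathfrak g$ for the Lie algebra of $G$, each $\xi\in\mathfrak g$ determines the fundamental vector field $\xi_M(x)=\frac{d}{dt}\big|_{t=0}\exp(t\xi)\cdot x$, and the linear map $\rho_v\colon\mathfrak g\to T_vM$, $\rho_v(\xi)=\xi_M(v)$, has image exactly $T_v(G\cdot v)$. Condition (b) says this image has constant dimension $r$ as $v$ ranges over $V$, so $v\mapsto T_v(G\cdot v)$ is a smooth rank-$r$ subbundle and $\rho_v$ admits, locally in $v$, a smooth right inverse onto that subbundle. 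Condition (a) guarantees that every $\gamma'(t)\in T_{\gamma(t)}V$ lies in the image of $\rho_{\gamma(t)}$.

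The key step is a path-lifting argument. Fix $v_0\in V$ and, since a connected manifold is path-connected, join it to an arbitrary $v_1\in V$ by a smooth path $\gamma\colon[0,1]\to V$. Using the local right inverse of $\rho$ I would choose $\xi(t)\in\mathfrak g$ depending smoothly on $t$ with $\rho_{\gamma(t)}(\xi(t))=\gamma'(t)$, and then solve in $G$ the time-dependent ODE $\dot g(t)=(R_{g(t)})_*\xi(t)$ with $g(0)=e$. A direct computation shows that both $\gamma(t)$ and $c(t)=g(t)\cdot v_0$ satisfy the same time-dependent ODE $\dot y(t)=\xi(t)_M(y(t))$ on $M$ with the same initial value $v_0$; by uniqueness of solutions they coincide, so $v_1=\gamma(1)=g(1)\cdot v_0\in G\cdot v_0$. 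As $v_1$ was arbitrary, $V\subseteq G\cdot v_0$.

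I expect the main obstacle to be the smooth (or merely continuous, which already suffices for the ODE) selection of $\xi(t)$: this is exactly the place where condition (b) is indispensable, since constant rank of $\rho_v$ is what turns $v\mapsto T_v(G\cdot v)$ into a genuine subbundle and permits a smooth splitting $\mathfrak g=\ker\rho_v\oplus(\text{complement})$ along $\gamma$. A secondary point to check carefully is the convention in the ODE on $G$ so that the induced curve on $M$ really has velocity $\xi(t)_M$; once that bookkeeping is right, uniqueness for ODEs closes the argument.
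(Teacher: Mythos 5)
Your argument is correct and is the standard proof of Mather's Lemma: the forward implication is trivial, and the converse is exactly the fundamental-vector-field/path-lifting argument, with (b) supplying the constant rank needed for a smooth local section of $\rho_v$ and (a) supplying membership of $\gamma'(t)$ in the image. The paper gives no proof of this statement --- it is quoted as a classical result with a reference to Mond--Nu\~no-Ballesteros --- so there is nothing to compare against; your write-up would serve as a proof of the cited lemma. Two small points: the selection of $\xi(t)$ is only local along $\gamma$, so you should either patch by compactness of $[0,1]$ (composing the resulting group elements on successive subintervals) or invoke a partition of unity to get a global piecewise-smooth selection; and note that condition (b) as printed, $\dim T_v(G_v)$, should be read as $\dim T_v(G\cdot v)$, the orbit rather than the stabilizer, which is how you have correctly interpreted it.
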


\begin{teo}\label{thomlevine}{\rm \bf (Thom-Levine)}
Let $U$ be a domain in $\C$, $W$ a neighbourhood of 0 in $\C^n$ and $F:W\times U\to\C$ such that $F(0,t)=0$. Write $f_t(x)=F(x,t)$, for all $t\in U$ and for all $x\in W$. If there is a family of analytic vector fields $v:W\times U\to \C^n$ with $v(0,t)=0$ for all $t\in U$ and $$\frac{\partial f_t}{\partial t}(x)=(df_t)(x)(v(x,t))$$ for all $t\in U$ and for all $x\in W$, then $f_t$ is analytically equivalent to $f_{t'}$ for any $t,t'\in U$.
\end{teo}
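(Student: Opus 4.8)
The plan is to use the classical homotopy (integration-of-vector-fields) method: produce a family of bi-analytic germs $\phi_t$ that trivializes $F$ by realizing $\phi_t$ as the flow of the time-dependent vector field $-v$. Concretely, fix $t_0\in U$ and let $\Phi(x,t)$ solve the ordinary differential equation
$$\frac{\partial\Phi}{\partial t}(x,t)=-v(\Phi(x,t),t),\qquad \Phi(x,t_0)=x,$$
and write $\phi_t(x)=\Phi(x,t)$. The first thing I would verify is that this flow trivializes $F$: differentiating $t\mapsto F(\phi_t(x),t)$ and substituting the defining equation together with the hypothesis $\partial f_t/\partial t=(df_t)(v)$ makes the two resulting terms cancel, so $F(\phi_t(x),t)$ is independent of $t$ and equals $f_{t_0}(x)$. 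Hence $f_t\circ\phi_t=f_{t_0}$, which exhibits the required analytic equivalence once we know each $\phi_t$ is a germ of bi-analytic diffeomorphism fixing $0$.

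Next I would justify that the $\phi_t$ are honest germs of diffeomorphisms at the origin. Since $v$ is analytic, the theorem on analytic dependence of solutions of ODEs on initial conditions and parameters gives that $\Phi$ is analytic in $(x,t)$ where defined, and the reverse flow supplies an analytic inverse, so each $\phi_t$ is bi-analytic. The condition $v(0,t)=0$ for all $t$ is what guarantees that $\Phi(0,t)\equiv 0$ solves the equation; by uniqueness $\phi_t(0)=0$, so the flow preserves the base point and $\phi_t$ is a legitimate germ $(\C^n,0)\to(\C^n,0)$.

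The delicate point, which I expect to be the main obstacle, is controlling the domain of existence of the flow: a priori $\Phi(x,t)$ is only defined for $t$ near $t_0$ and $x$ near $0$. Here the hypothesis $v(0,t)=0$ is again decisive. Because $v$ is analytic and vanishes at the origin, on a small polydisc one has an estimate $\|v(x,t)\|\le C\|x\|$, and a Gronwall-type argument yields $\|\phi_t(x)\|\le\|x\|\,e^{CL}$ when one integrates along a path in $U$ of length $L$; thus trajectories starting near $0$ stay in a fixed neighbourhood and the flow exists uniformly for $t$ in a neighbourhood of $t_0$. Since $t$ is complex, I would integrate along paths in $U$, holomorphy of $v$ making the construction compatible with the complex structure; because only local existence is needed here, no global path-independence or monodromy issue arises. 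This establishes that $f_t$ is analytically equivalent to $f_{t_0}$ for every $t$ in a neighbourhood of each $t_0$.

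Finally, I would upgrade this local statement to the global one over the connected domain $U$. The relation ``$f_t$ is analytically equivalent to $f_{t'}$'' is reflexive, symmetric, and transitive by composition of bi-analytic germs, and the previous step shows that each equivalence class is open in $U$. Since $U$ is connected there is a single class, so $f_t$ is analytically equivalent to $f_{t'}$ for all $t,t'\in U$, as claimed.
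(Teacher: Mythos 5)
The paper gives no proof of this statement: it is quoted as a classical preliminary with a pointer to \cite{nunomond}, and your argument is precisely the standard integration-of-vector-fields proof that such references use. Your write-up is correct, and it handles the two points that genuinely need care --- the sign in the flow equation $\partial\Phi/\partial t=-v(\Phi,t)$ needed to make $F(\phi_t(x),t)$ constant, and the use of $v(0,t)=0$ plus a Gronwall estimate and the connectedness of $U$ to pass from local to global equivalence.
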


We refer to \cite{nunomond} for more details on all these definitions and results.

On one hand, we will prove a Thom-Levine type result for $\mathscr R_X$-analytical triviality in Section \ref{sectionthomlevine}. On the other hand, motivated by the Thom-Levine theorem, we will define strongly rational $\mathscr R_{X}$-bi-Lipschitz triviality in Section \ref{rigid}, which is a relative version of the strong bi-Lipschitz triviality defined by Fernandes and the second author in \cite{fernandesruas}.

\section{Thom-Levine type result for $\mathscr R_X$-analytical triviality}\label{sectionthomlevine}

Let $X\subset \C^n$ be a complex hypersurface singularity given by $\varphi:(\C^n,0)\to (\C,0)$ such that $X=\varphi^{-1}(0)$ and let $\Theta_X=\langle \eta_1,\ldots,\eta_r\rangle$ be the $\O_n$-module of tangent vector fields to $X$. Denote by $\Theta_X^0$ the submodule of $\Theta_X$ given by all the vector fields which vanish at the origin. Let $f:(\C^n,0)\to (\C,0)$ be a function-germ with isolated singularity at 0. Define the $\mathscr R_{X,e}$-codimension (also known as the Bruce-Roberts Milnor number) as $$\mathscr R_{X,e}-cod(f)=\mu_{BR}(f,X)=\dim_{\C}\frac{\O_n}{df(\Theta_X)}.$$ Similarly, define the $\mathscr R_{X}$-codimension as $$\mathscr R_{X}-cod(f)=\dim_{\C}\frac{\m_n}{df(\Theta_X^0)}.$$ Let $s=rk\{\eta_1(0),\ldots,\eta_r(0)\}$. The number $s$ is also known as the dimension of the analytic stratum, i.e. the dimension of the tangent space to the iso-singular locus. Notice that if ${0}$ is a stratum of $X$ then $s=0$ and in this case $\Theta_X=\Theta_X^0$. Notice also that $s\leq n$ and $s=n$ if and only if $X=\C^n$.

\begin{lem}\label{cods}
Let $f:(\C^n,0)\to (\C,0)$ be a function-germ with isolated singularity at 0. The following hold
\begin{enumerate}
\item[i)] $$\dim_{\C}\frac{df(\Theta_X)}{df(\Theta_X^0)}=s,$$
\item[ii)] $\mathscr R_{X}-cod(f)=\mathscr R_{X,e}-cod(f)+s-1=\mu_{BR}(f,X)+s-1.$
\end{enumerate}
\end{lem}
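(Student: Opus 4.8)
The plan is to establish part (i) first and then deduce (ii) by an elementary dimension count along the chain of ideals $df(\Theta_X^0)\subseteq df(\Theta_X)\subseteq \m_n\subseteq \O_n$.

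For (i), I would first choose the generators so that $\eta_1(0),\ldots,\eta_s(0)$ form a basis of the image of the $\C$-linear evaluation map $\mathrm{ev}\colon \Theta_X\to \C^n$, $\eta\mapsto \eta(0)$. Since $\ker(\mathrm{ev})=\Theta_X^0$ and the image has dimension $s=\mathrm{rk}\{\eta_1(0),\ldots,\eta_r(0)\}$, this produces a vector-space splitting $\Theta_X=\C\langle \eta_1,\ldots,\eta_s\rangle\oplus \Theta_X^0$. Applying $df$ gives $df(\Theta_X)=\C\langle df(\eta_1),\ldots,df(\eta_s)\rangle + df(\Theta_X^0)$, so the quotient $df(\Theta_X)/df(\Theta_X^0)$ is spanned by the classes of $df(\eta_1),\ldots,df(\eta_s)$ and hence has dimension at most $s$.

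The crux is the reverse inequality, namely that these $s$ classes are linearly independent. Suppose $\sum_{j=1}^s c_j\,df(\eta_j)=df(\theta)$ for some $\theta\in\Theta_X^0$ and $c_j\in\C$; then $\xi:=\sum_j c_j\eta_j-\theta$ lies in $\ker(df)\cap\Theta_X$. At this point I would invoke the hypothesis that $f$ has an isolated singularity: the partial derivatives $\partial_1 f,\ldots,\partial_n f$ then form a regular sequence in $\O_n$, so by exactness of the Koszul complex every syzygy of them is an $\O_n$-combination of the Koszul relations $(\partial_j f)\partial_i-(\partial_i f)\partial_j$. Since each such relation vanishes at $0$ (all $\partial_k f(0)=0$), every $\xi$ with $df(\xi)=0$ satisfies $\xi(0)=0$. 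This forces $\sum_j c_j\eta_j(0)=0$, and independence of the $\eta_j(0)$ gives $c_j=0$, so the dimension is exactly $s$. I expect the containment $\ker(df)\subseteq\{\xi:\xi(0)=0\}$ to be the main obstacle, since it is the only place where the isolated-singularity hypothesis is genuinely needed.

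For (ii), I would note that $\partial_k f(0)=0$ already gives $df(\Theta_X^0)\subseteq df(\Theta_X)\subseteq \m_n$, and then compute $\dim_\C \O_n/df(\Theta_X^0)$ in two ways. On one hand it equals $\dim_\C \O_n/\m_n+\dim_\C \m_n/df(\Theta_X^0)=1+\mathscr R_{X}-cod(f)$; on the other it equals $\dim_\C \O_n/df(\Theta_X)+\dim_\C df(\Theta_X)/df(\Theta_X^0)=\mu_{BR}(f,X)+s$ using (i). Equating the two expressions yields $\mathscr R_{X}-cod(f)=\mu_{BR}(f,X)+s-1$. The only point to keep in mind is that all quotients are finite-dimensional, which holds under the standing assumption that $\mu_{BR}(f,X)$ is finite.
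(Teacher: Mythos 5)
Your proposal is correct, but the key step of part (i) is proved by a genuinely different argument than the paper's. Both proofs reduce to the same crucial claim -- that a vector field $\xi$ with $df(\xi)=0$ must vanish at the origin -- but the paper establishes it geometrically, following Lemma 3.1 of Mond--Nu\~no-Ballesteros: from a putative linear dependence it builds a vector field $v$ with $df(v)=0$, uses the rank condition on $\xi_1(0),\ldots,\xi_s(0)$ to show $v(0)\neq 0$, and then rectifies $v$ to conclude that $f$ would be constant in one coordinate direction and hence have a non-isolated singularity. You instead prove the claim algebraically: the isolated singularity makes $\partial_1 f,\ldots,\partial_n f$ a regular sequence in $\O_n$, exactness of the Koszul complex says every syzygy of the partials is an $\O_n$-combination of the Koszul relations, and since each Koszul relation vanishes at $0$ (all $\partial_k f(0)=0$), so does $\xi$. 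Your route is cleaner and more self-contained in the complex-analytic setting, where $\O_n$ is a Noetherian regular local ring and the regular-sequence fact is standard; it also yields the stronger statement $\ker(df)\subseteq\m_n\theta_n$ for \emph{all} vector fields, not just those tangent to $X$. The paper's route needs no commutative algebra beyond basics and is the one that would survive in a real $C^\infty$ setting, where the ring of germs is not Noetherian and Koszul-type arguments are delicate. Part (ii) is essentially identical in both: the paper phrases the dimension count as a short exact sequence of the quotients by $df(\Theta_X^0)$ and $df(\Theta_X)$ inside $\m_n$, while you organize it as additivity of $\dim_\C$ along the chain $df(\Theta_X^0)\subseteq df(\Theta_X)\subseteq\m_n\subseteq\O_n$; these are the same computation.
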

\begin{proof}
Let $\xi_1,\ldots,\xi_s$ be $s$ linearly independent generators in $\Theta_X$ such that $\xi_i(0)\neq 0$ for $i=1,\ldots,s$. Choose $r-s$ linearly independent vector fields $\eta_j'$, $j=s+1,\ldots,r$, in $\Theta_X$ such that $\Theta_X=\langle\xi_1,\ldots,\xi_s,\eta_{s+1}',\ldots,\eta_r'\rangle$. Now $$\frac{df(\Theta_X)}{df(\Theta_X^0)}=\frac{\langle\xi_1(f),\ldots,\xi_s(f),\eta_{s+1}'(f),\ldots,\eta_r'(f)\rangle}{\m_n\langle\xi_1(f),\ldots,\xi_s(f)\rangle + \langle\eta_{s+1}'(f),\ldots,\eta_r'(f)\rangle}\approx\frac{\langle\xi_1(f),\ldots,\xi_s(f)\rangle}{\m_n\langle\xi_1(f),\ldots,\xi_s(f)\rangle}.$$ We write $\xi_j=\sum_{i=1}^n\xi_{j_i}\frac{\partial}{\partial x_i}$, so the above quotient is equal to $$Sp_{\C}\{\overline{\sum_{i=1}^n\xi_{1_i}\frac{\partial f}{\partial x_i}},\ldots,\overline{\sum_{i=1}^n\xi_{s_i}\frac{\partial f}{\partial x_i}}\},$$ where $\overline{A}$ means the class of $A$ in the quotient.

We follow now the steps of the proof of Lemma 3.1 in \cite{nunomond}. Suppose that these classes are linearly dependent. Then there exist $c_1,\ldots,c_s\in\C$ not all zero such that $$\sum_{j=1}^sc_j\overline{\sum_{i=1}^n\xi_{j_i}\frac{\partial f}{\partial x_i}}=0,$$ and therefore, there exist $\alpha_1,\ldots,\alpha_s\in\m_n$ such that $$\sum_{j=1}^sc_j\sum_{i=1}^n\xi_{j_i}\frac{\partial f}{\partial x_i}=\sum_{j=1}^s\alpha_j\sum_{i=1}^n\xi_{j_i}\frac{\partial f}{\partial x_i}.$$ This implies that $$0=\sum_{j=1}^s(c_j-\alpha_j)\sum_{i=1}^n\xi_{j_i}\frac{\partial f}{\partial x_i}=\sum_{i=1}^n(\sum_{j=1}^s(c_j-\alpha_j)\xi_{j_i})\frac{\partial f}{\partial x_i}.$$

Consider the vector field $$v=\sum_{i=1}^n(\sum_{j=1}^s(c_j-\alpha_j)\xi_{j_i})\frac{\partial}{\partial x_i}.$$ Suppose that all the components of $v$ vanish at 0. Since $\alpha_j(0)=0$, $j=1,\ldots,s$, we get
\[
\begin{matrix}
c_1\xi_{1_1}(0)+\ldots+c_s\xi_{s_1}(0)=0\\
\vdots\\
c_1\xi_{1_n}(0)+\ldots+c_s\xi_{s_n}(0)=0\\
\end{matrix}
\]
We have that $s\leq n$ and $rk(\xi_{i_j})_{1\leq i\leq s,1\leq j\leq n}=s$, so $c_1=\ldots=c_s=0$, which is a contradiction. Therefore, not all the components of $v$ vanish at the origin, and $v$ is non-singular. Since $df(v)=0$, f has critical points over an integral curve at 0 and thus, f has a non-isolated singularity, which is again a contradiction. Hence, $\overline{\xi_1(f)},\ldots,\overline{\xi_s(f)}$ are linearly independent and $$\dim_{\C}\frac{df(\Theta_X)}{df(\Theta_X^0)}=s.$$

For the second part, consider the short exact sequence $$0\rightarrow \frac{df(\Theta_X)}{df(\Theta_X^0)}\rightarrow \frac{\m_n}{df(\Theta_X^0)}\rightarrow \frac{\m_n}{df(\Theta_X)}\rightarrow 0.$$ Using i) we get $\mathscr R_{X}-cod(f)=\mathscr R_{X,e}-cod(f)-1+\dim_{\C}\frac{df(\Theta_X)}{df(\Theta_X^0)}=\mathscr R_{X,e}-cod(f)+s-1=\mu_{BR}(f,X)+s-1.$
\end{proof}

\begin{rem}

\begin{enumerate}
\item[i)] If $\{0\}$ is a stratum of $X$, $\mathscr R_{X}-cod(f)=\mathscr R_{X,e}-cod(f)-1$.
\item[ii)] If $X=\C^n$ we recover the non relative version of the relation between the Milnor number and the codimension of the $\mathscr R$-orbit of $f$, $\mathscr R-cod(f)=\mathscr R_{e}-cod(f)+n-1=\mu(f)+n-1$, which can be found in Lemma 3.1 in \cite{nunomond}, for example.
\end{enumerate}
\end{rem}

\begin{definition}
Suppose that $\eta_1,\ldots,\eta_r$ generate $\Theta_X$ in a neighbourhood $U$ of $0\in\mathbb C^n$ and let $T_U^*\mathbb C^n$ be the restriction of the cotangent bundle on $\mathbb C^n$ to $U$. Consider $$LC_U(X)=\{(x,\delta)\in T_U^*\mathbb C^n:\delta(\eta_i(x))=0,i=1,\ldots,r\}.$$ The \emph{logarithmic characteristic variety} $LC(X,0)$ is the germ at the origin of $LC_U(X)$ in $T^*\mathbb C^n$ at $T_0^*\mathbb C^n$.
\end{definition}

\begin{definition}
Suppose that $\eta_1,\ldots,\eta_r$ generate $\Theta_X$. Given $f:(\C^n,0)\to (\C,0)$ a function-germ with isolated singularity at 0 and $F:(\C^n\times \C,0)\to (\C,0)$ a deformation of $f$ such that $F(0,t)=0$. We say that $F$ is a \emph{good deformation} of $f$ if $$C(F)=\{(x,t)\in\C^n\times \C:dF(\eta_i)=0,i=1,\ldots,r\}$$ does not split, i.e. $C(F)=\{0\}\times \C$ near $(0,0)$.
\end{definition}

\begin{definition}
Given $(w_{1},...,w_{n}:d_{1},...,d_{p})$, $w_{i}$, $d_{j}\in
{\mathbb Q}^{+}$, a map germ $f:\K^{n},0\rightarrow \K^{p},0$ is
\emph{weighted homogeneous} of type $(w:d)=(w_{1},...,w_{n}:d_{1},...,d_{p})$
if for all $\lambda \in
\K-\{0\}$:\[f(\lambda^{w_{1}}x_{1},\lambda^{w_{2}}x_{2},...,
\lambda^{w_{n}}x_{n})=
(\lambda^{d_{1}}f_{1}(x),\lambda^{d_{2}}f_{2}(x),...,
\lambda^{d_{p}}f_{p}(x)).\]

For function germs we will sometimes forget $d$ and consider weighted homogeneous function germs of weight $w=(w_{1},...,w_{n})$.
\end{definition}

We can now prove a Thom-Levine type result for $\mathscr R_X$-analytical triviality of families.
\begin{prop}\label{thomlevine}
Let $X$ be a hypersurface such that $LC(X,0)$ is Cohen-Macaulay. Let $U$ be a domain in $\C$ and $F(x,t)$ a family of polynomials such that for every $t\in U$, $f_t(x)=F(x,t)$ is weighted homogeneous with weight $w$ and has an isolated singularity at 0. If $F$ is a good deformation of $f_0$ and $\frac{\partial F}{\partial t}\in T\mathscr R_X(f_t)=df_t(\Theta_X^0)$ for each fixed $t\in U$, then $f_{t_1}$ is $\mathscr R_X$-analytically equivalent to $f_{t_2}$ for all $t_1,t_2\in U$.
\end{prop}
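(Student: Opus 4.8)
The plan is to reduce, via the classical Thom--Levine theorem, to the construction of a single analytic family of vector fields, and then to produce that family by a flatness argument driven by the Cohen--Macaulay hypothesis on $LC(X,0)$. First I would reduce to an infinitesimal (homological) problem. By the $\mathscr R_X$-version of the classical Thom--Levine theorem stated above, it suffices to find a family of analytic vector fields $v(x,t)=\sum_{i=1}^r g_i(x,t)\,\eta_i(x)$, tangent to $X$ and with $v(0,t)=0$, satisfying $\frac{\partial F}{\partial t}=dF(v)$ on $W\times U$: integrating such a $v$ yields a one-parameter family of germs of diffeomorphisms fixing $0$ and preserving $X=\varphi^{-1}(0)$, whence $f_{t_1}$ is $\mathscr R_X$-equivalent to $f_{t_2}$. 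Since $U$ is connected it is enough to solve this equation for $t$ near an arbitrary $t_0$, and weighted homogeneity should guarantee that the germ solution actually lives on a fixed neighbourhood of $0$ (one takes $v$ weight-preserving), so that Thom--Levine applies verbatim. Thus the problem becomes: show that $\frac{\partial F}{\partial t}$ lies in the relative submodule $dF(\Theta_X^{0}\otimes \mathcal O_{n+1})$.

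The heart of the argument is to control the relative module
$$\mathcal M=\frac{\mathcal O_{n+1}}{\langle \eta_1(F),\ldots,\eta_r(F)\rangle},\qquad \eta_i(F)=dF(\eta_i),$$
together with its refinement $\mathcal M^{0}=\mathcal O_{n+1}/dF(\Theta_X^{0}\otimes\mathcal O_{n+1})$. I would identify $\langle\eta_i(F)\rangle$ with the pullback, under the graph map $\gamma(x,t)=(x,\partial_x F(x,t))$, of the ideal of $LC(X)$ in $T^*\mathbb C^n$, so that $\mathrm{Supp}\,\mathcal M=C(F)$; the good deformation hypothesis gives $\mathrm{Supp}\,\mathcal M=\{0\}\times\mathbb C$, of dimension $1$. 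Working in $T^*\mathbb C^n\times\mathbb C$, the scheme $C(F)$ is cut out of the Cohen--Macaulay space $LC(X)\times\mathbb C$ (of dimension $n+1$) by the $n$ equations $p_j-\partial_{x_j}F=0$. Because goodness forces this intersection to have the expected dimension $1=(n+1)-n$, these $n$ equations form a regular sequence on $\mathcal O_{LC(X)\times\mathbb C}$, so $\mathcal M$ is Cohen--Macaulay of dimension $1$. Being finite over $\mathbb C\{t\}$ with $\mathrm{depth}=1$, it is torsion free and hence free over $\mathbb C\{t\}$; the relative version of Lemma \ref{cods}(i), giving $dF(\Theta_X\otimes\mathcal O_{n+1})/dF(\Theta_X^{0}\otimes\mathcal O_{n+1})\cong\mathbb C\{t\}^{\,s}$, then upgrades this to the freeness of $\mathcal M^{0}$. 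Establishing this Cohen--Macaulayness and the attendant freeness is the step I expect to be the main obstacle, since it is precisely here that both hypotheses are consumed, and some care is needed in the passage from $\Theta_X$ to the origin-vanishing module $\Theta_X^{0}$.

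Finally I would conclude by base change. Freeness of $\mathcal M^{0}$ over $\mathbb C\{t\}$ means its formation commutes with restriction to a fibre, so $\mathcal M^{0}\otimes_{\mathbb C\{t\}}\mathbb C\{t\}/(t-t_0)\cong \mathcal O_n/df_{t_0}(\Theta_X^{0})$ for every $t_0$. By hypothesis $\frac{\partial f_t}{\partial t}\big|_{t=t_0}\in df_{t_0}(\Theta_X^{0})$, so the image of $\frac{\partial F}{\partial t}$ vanishes in each fibre $\mathcal M^{0}/(t-t_0)\mathcal M^{0}$. Writing this image in a basis of the free module $\mathcal M^{0}$, its coordinate functions are analytic in $t$ and vanish at every $t_0$, hence vanish identically; therefore $\frac{\partial F}{\partial t}\in dF(\Theta_X^{0}\otimes\mathcal O_{n+1})$. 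This produces the required analytic family $v$ with $v(0,t)=0$, and the reduction of the first paragraph completes the proof.
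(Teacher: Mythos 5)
Your proposal is correct in outline but takes a genuinely different route from the paper's. The paper works in a finite\--dimensional jet space: weighted homogeneity places the family $P=\{f_t\}$ inside $H_w^d(n,1)\subset J^m(n,1)$, acted on by $G=j^m(\mathscr R_X)$; Lemma~\ref{cods} identifies $\operatorname{codim}(G\cdot f)$ with $\mu_{BR}(f,X)+s-1$; the Cohen--Macaulay and good-deformation hypotheses are consumed in a single citation (Theorem 3.7 of \cite{ahmedruastomazella}) giving constancy of $\mu_{BR}(f_t,X)$, hence of $\dim T_{f_t}(G\cdot f_t)$; and Mather's Lemma then puts $P$ in a single orbit. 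You instead stay at the module level: your regular-sequence argument on $LC(X)\times\C$ is essentially a re-derivation of the content of that cited theorem (freeness of $\mathcal M$ over $\C\{t\}$ being equivalent to constancy of $\mu_{BR}(f_t,X)$), and you then use freeness of $\mathcal M^{0}$ to promote the fiberwise hypothesis $\frac{\partial F}{\partial t}\in df_t(\Theta_X^0)$ to a relative solution $\frac{\partial F}{\partial t}\in dF(\Theta_X^{0}\otimes\mathcal O_{n+1})$, concluding by integration as in the classical Thom--Levine theorem. The paper's route is shorter given the citation and sidesteps the relative lifting problem entirely, since Mather's Lemma only needs the fiberwise tangency $T_fP\subset T_f(G\cdot f)$, which is literally the hypothesis; your route is more self-contained on the key input and does not really use weighted homogeneity, which the paper needs in order to make the jet-space reduction finite-dimensional. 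The two places where your sketch carries the technical weight --- the identification of $\mathcal M$ with $\mathcal O_{LC(X)\times\C}/\langle p_j-\partial_{x_j}F\rangle$ and the passage from $\mathcal M$ to $\mathcal M^{0}$ via a relative version of Lemma~\ref{cods}(i) (an extension of finitely generated free $\C\{t\}$-modules, hence free) --- are exactly the points the paper outsources to \cite{ahmedruastomazella} and to Lemma~\ref{cods}(ii) respectively, and both go through; just make sure to state explicitly that finiteness of $\mathcal M^{0}$ over $\C\{t\}$ comes from $\mu_{BR}(f_0,X)<\infty$ together with the preparation theorem, and that the conclusion for all $t_1,t_2\in U$ (not just near $0$) follows by repeating the germ argument at each $t_0$ and using connectedness of $U$.
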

\begin{proof}
Consider $H_w^d(n,1)$ the space of weighted homogeneous polynomials in $n$-variables and of weights $w$. For a large enough $m$, $H_w^d(n,1)$ can be seen as a subset of the space of $m$-jets $M=J^m(n,1)$. The set $A_w^d(n,1)$ of weighted homogeneous polynomials with isolated singularity at 0 is a Zariski open subset of $H_w^d(n,1)$. Consider the action on $H_w^d(n,1)$ given by $G=j^m(\mathscr R_X)$. For $f\in A_w^d(n,1)$, by Lemma \ref{cods}, we have
\begin{align*}
codim(G\cdot f)&=\dim(M)-\dim(G\cdot f)=\dim(M)-\dim(T_f(G\cdot f))  \\
&=\dim\frac{\m_n}{\m_{n}^{m+1}}-\dim\frac{T\mathscr R_X(f)}{\m_{n}^{m+1}}=\dim\frac{\m_n}{T\mathscr R_X(f)}\\
&=\mathscr R_X-cod(f)=\mu_{BR}(f,X)+s-1.
\end{align*}
Now, by Theorem 3.7 in \cite{ahmedruastomazella}, if $LC(X,0)$ is Cohen-Macaulay and $F$ is a good deformation, then $\mu_{BR}(f_t,X)$ does not depend on $t$ (it is constant in the family), therefore, $\dim T_{f_t}(G\cdot f_t)$ does not depend on $t$.

Set $P=\{f_t:t\in U\}$. By hypothesis $T_{f}P\subset T_f(G\cdot f)$ for all $f\in P$, and by the above discussion $\dim T_{f}(G\cdot f)$ is constant for all $f\in P\subset A_w^d(n,1)$, therefore, by Mather's Lemma, $P$ is contained in a single $G$-orbit, i.e. $f_{t_1}$ is $\mathscr R_X$-analytically equivalent to $f_{t_2}$ for all $t_1,t_2\in U$.
\end{proof}

\begin{rem}
The above result holds in the particular case that the hypersurface $X$ is a free divisor, since by \cite{bruceroberts} $LC(X)$ is Cohen-Macaulay. Also, we can ensure the constancy of the Bruce-Roberts Milnor number in the family (and thus remove the hypotheses of $LC(X)$ being Cohen-Macaulay and $F$ being a good deformation) if $X$ is a hypersurface with isolated singularity by \cite{oreficenunotomazella}, and more generally if $X$ is an ICIS by \cite{limaoreficenunotomazella}.
\end{rem}

\section{Criterion for $\mathscr R_X$-bi-Lipschitz triviality}\label{trivial}

In Theorem 3.4 in \cite{ruastomazella} the second author and Tomazella gave a sufficient condition for a family to be topologically $\mathscr R_X$-trivial. Their proofs adapt well to the bi-Lipschitz setting. We state the results in this section but we only include a sketch of the proof for the main result and do not include the proofs of the other results since they are direct adaptations of the proofs in \cite{ruastomazella}. For this we need the following definitions. In this section $\mathbb K=\mathbb R$ or $\mathbb C$.

\begin{definition}
A \emph{control function} $\rho:\mathbb K^n\to \R$ is a real analytic, irreducible function such that $\rho(x)\geq 0$ and $\rho(x)=0$ if and only if $x=0$.
\end{definition}

In what follows we consider control functions of the form $\rho=\sum_{i=1}^{n}x_i^{a_i}\overline{x_i}^{a_i}=\sum_{i=1}^{n}\vert x_i\vert^{2a_i}$, unless stated otherwise.

\begin{definition} Let $f:(\mathbb K^n,0)\to (\mathbb K,0)$ be a function-germ with isolated singularity at 0 and $f_t=F:(\mathbb K^n\times \mathbb K,0)\to (\mathbb K,0)$ a deformation of $f$. $F$ is said to be \emph{strongly rational $\mathscr R_X$-bi-Lipschitz trivial} if there exists a control function $\rho$ and a family of real analytic vector fields $v_t=\sum_{i=1}^nv_{i_t}\frac{\partial}{\partial x_i}$ tangent to $X$ such that $V_t=\sum_{i=1}^n\frac{v_{i_t}}{\rho}\frac{\partial}{\partial x_i}$ satisfies that $\frac{v_{i_t}}{\rho}$ are Lipschitz functions where the Lipschitz constant does not depend on $t$ and $$\frac{\partial F}{\partial t}=dF(V_t).$$ This implies that $$\rho\frac{\partial F}{\partial t}=\sum_{j=1}^r\alpha_jdF(\eta_j),$$ for some real analytic functions $\alpha_j$ and where the $\eta_j$ are the generators of $\Theta_X$.

Notice that while $\frac{v_{i_t}}{\rho}$ are Lipschitz functions, $\frac{\alpha_j}{\rho}$ need not be Lipschitz.
\end{definition}

\begin{ex}\label{ex1}

\begin{itemize}
\item[i)] Let $X\subset\mathbb K^3$ be given by $\varphi(u,v,w)=2u^5-v^7+w^5$. 
Consider $F=u^5+v^7+2w^5+tu^3v^4$, a deformation of $f=u^5+v^7+2w^5$. The family $F$ is not ${\mathscr R}$-trivial since $u^3v^4$ is not in $Jf$, and hence not ${\mathscr R}_{X}$-trivial. Consider the control function $\rho=\vert u\vert^{20}+|v|^{28}+|w|^{20}$. One can check using Singular that $u^{10}\frac{\partial F}{\partial t}=dF(\xi_1),v^{14}\frac{\partial F}{\partial t}=dF(\xi_2),w^{10}\frac{\partial F}{\partial t}=dF(\xi_3)$ for some $\xi_1,\xi_2,\xi_3\in \Theta_X$ and so $$\rho\frac{\partial F}{\partial t}=dF(\overline{u}^{10}\xi_1+\overline{v}^{14}\xi_2+\overline{w}^{10}\xi_3).$$ We write $$dF(\xi_i)=\beta_1^idF(\eta_e)+\beta_2^idF(\eta_2)+\beta_3^idF(\eta_3)+\beta_4^idF(\eta_4)$$ for $i=1,2,3$ and use
Singular to obtain the $\beta_j^i$. Now,
\begin{align*}
&dF(\overline{u}^{10}\xi_1+\overline{v}^{14}\xi_2+\overline{w}^{10}\xi_3)=\\
&(\overline{u}^{10}\beta_1^1+\overline{v}^{14}\beta_1^2+\overline{w}^{10}\beta_1^3)dF(\eta_e)+\ldots+(\overline{u}^{10}\beta_4^1+\overline{v}^{14}\beta_4^2+\overline{w}^{10}\beta_4^3)dF(\eta_e)
\end{align*}   
and so we get $\rho\frac{\partial F}{\partial t}=\sum_{j=1}^r\alpha_jdF(\eta_j)$ for some real analytic (not polynomial) $\alpha_j$. From the $\alpha_j$ we get a real analytic vector field $v_t=\sum_{i=1}^nv_{i_t}\frac{\partial}{\partial x_i}$ tangent to $X$ such that $V_t=\sum_{i=1}^n\frac{v_{i_t}}{\rho}\frac{\partial}{\partial x_i}$ satisfies $\frac{\partial F}{\partial t}=dF(V_t).$ It then just remains to check that $\frac{v_{i_t}}{\rho}$ are Lipschitz functions for all $i$. Therefore, $F=u^5+v^7+2w^5+tu^3v^4$ is strongly rational $\mathscr R_X$-bi-Lipschitz trivial.

\item[ii)]
Let $X\subset\mathbb K^4$ be given by $\varphi(x,y,z,w)=x^2+y^2+z^2+w^3$ with filtration $((3,3,3,2),6)$. $\Theta_X$ is generated by
\begin{align*}
\eta_{e}=&3x\frac{\partial}{\partial x}+3y\frac{\partial}{\partial y}+3z\frac{\partial}{\partial z}+2w\frac{\partial}{\partial w},\\
\eta_{2}=&2y\frac{\partial}{\partial x}-2x\frac{\partial}{\partial y},\eta_{3}=2z\frac{\partial}{\partial x}-2x\frac{\partial}{\partial z},\\
\eta_{4}=&3w^2\frac{\partial}{\partial x}-2x\frac{\partial}{\partial w},\eta_{5}=3w^2\frac{\partial}{\partial y}-2y\frac{\partial}{\partial w},\\
\eta_{6}=&2z\frac{\partial}{\partial y}-2y\frac{\partial}{\partial z},\eta_{7}=3w^2\frac{\partial}{\partial z}-2z\frac{\partial}{\partial w},
\end{align*}
Consider $f=2x^2-y^2-3z^2+w^3$, and a deformation $F=2x^2-y^2-3z^2+w^3+tx^2w$. The family $F$ is not ${\mathscr R_X}$-trivial since $x^2w$ is not in $dF(\Theta_X)$. Taking $\rho=|x|^{24}+|y|^{24}+|z|^{24}+|w|^{36}$ and with similar calculations to item i), one can see that $F=2x^2-y^2-3z^2+w^3+tx^2w$ is strongly rational $\mathscr R_X$-bi-Lipschitz trivial.
\end{itemize}
\end{ex}

\begin{rem}
\begin{itemize}
\item[i)] Two function-germs $f,g:(\mathbb K^n,0)\to (\mathbb K,0)$ are $\mathscr R_X$-bi-Lipschitz equivalent if there exists a bi-Lipschitz map germ $\phi:(\mathbb K^n,0)\to (\mathbb K^n,0)$ which preserves $X$ and such that $f=g\circ\phi$. If $F$ is a strongly rational $\mathscr R_X$-bi-Lipschitz trivial, then for small $t\neq t'$, $f_t$ is $\mathscr R_X$-bi-Lipschitz equivalent to $f_{t'}$. This follows from the proof of the Thom-Levine theorem since the flow of the Lipschitz vector field tangent to $X$ defines a family of homeomorphisms which preserve $X$.

\item[ii)] In \cite{camararuas} C\^amara and the second author prove that in the non relative case, for weighted homogeneous (not homogeneous) plane curves, bi-Lipschitz triviality implies strongly bi-Lipschitz triviality.
\end{itemize}

\end{rem}

In what follows we can assume that $df_{t}\xi(0)=0, \forall \, \xi
\in \Theta_{X}$ and $F(0,t)=0$ for all $t$. In fact, if $\xi \in
\Theta_{X}$, then $df_{t}\xi.\frac{\partial F}{\partial
t}=df_{t}(\frac{\partial F}{\partial t}.\xi)$. If
$df_{t}\xi_{0}(0)\neq 0$ for some $\xi_{0}$, then
\[
\frac{\partial F}{\partial t}=df_{t}(\frac{\frac{\partial
F}{\partial t}.\xi_{0}}{ df_{t}\xi_{0}})\] and hence the
deformation is analytically ${\mathscr R}_{X}$-trivial. Observe that $\frac{\frac{\partial
F}{\partial t}.\xi_{0}}{ df_{t}\xi_{0}}\in \Theta_{X}^{0}$.

\begin{definition}
\begin{enumerate}
\item[a)] We assign weights $w_1,..,w_n$, $w_{i} \in
{\mathbb Q}^{+}$, $i=1,...,n$  to a given coordinate system
$x_1,...,x_n$ in $\K^n$. The filtration of a monomial
$x^{\beta}=x_{1}^{\beta_{1}}x_{2}^{\beta_{2}}...x_{n}^{\beta_{n}}$
with respect to this set of weights is defined by
 fil$(x^{\beta})=\sum_{i=1}^{n}\beta_{i}w_{i}$.

\item[b)] We define a filtration in the ring ${\mathcal O}_{n}$ via the
function
 \[{\rm fil}(f)={\rm inf}_{|\beta|}\{\mbox{fil}(x^{\beta}) :
\frac{\partial^{|\beta|}f}{\partial x^{\beta}}(0)\neq 0\},\,
|\beta|=\beta_1+\ldots+\beta_n.\] The filtration of a map germ
$f=(f_1,\ldots,f_p):\K^n,0 \to \K^p,0$ is fil$(f)=(d_1,\ldots,d_p)$,
where fil$(f_i)=d_i$.

\item[c)] We extend the filtration to $\Theta_X$, defining
$w(\frac{\partial }{\partial x_j})=-w_j$ for all $j=1,..,n$, so
that given $\xi=\sum^{n}_{j=1}\xi_{j}\frac{\partial }{\partial
x_j} \in \Theta_X$, then fil$(\xi)={\rm
inf}_j\{\mbox{fil}(\xi_j)-w_j\}$.

\end{enumerate}
\end{definition}

 Let $w=w_1w_2\cdots w_n$, ${\bf
w}=(w_1,...,w_n)$, and $\|x\|_{\bf
w}=(|x_1|^{2w/w_1}+...+|x_n|^{2w/w_n})^{1/2w}$.

In what follows  $A \lesssim B$ means there is some positive
constant $C$ with $A\leq C B$.

\begin{teo} \label{centralpesado} Let ${\bf w}=(w_1,..,w_n),\, w_1 \geq w_2 \ldots \geq w_n$ be an
$n$-tuple of positive integers which are the weights of $x_1,\ldots,x_n$. Let $\eta_1,\ldots, \eta_m$ be
a system of generators for $\Theta_{X}^0$ and $d_i={\rm
fil}(\eta_i)$, $i=1,\ldots,m$. Let $f_0:\K^n,0 \to \K,0$ be a
${\mathscr R}_{X}$-finitely determined germ and $F:\K^n\times \K,0
\to \K,0$  a good deformation of $f_0$. If
\[| \frac{\partial F}{\partial t}|\lesssim  \sup_{i=1,...,m}
\{|df_t(\eta_i)| \|x\|_{\bf w}^{-d_{i}+w_1-w_n}\}\,\, {\rm for} \,x(\neq
0)\,{\rm near}\, 0\]
 then $F$ is strongly rational ${\mathscr R}_{X}$-bi-Lipschitz trivial.
\end{teo}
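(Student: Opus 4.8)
The plan is to realise the trivialisation by a single explicitly constructed vector field, following the scheme of Theorem 3.4 in \cite{ruastomazella} but keeping careful track of the weighted sizes of every object. Throughout write $\|x\|=\|x\|_{\bf w}$ and fix the control function $\rho=\|x\|_{\bf w}^{2w}=\sum_{i=1}^n|x_i|^{2w/w_i}$, which is real analytic, nonnegative, vanishes only at $0$, and is weighted homogeneous of filtration $2w$. Since $f_0$ is $\mathscr R_X$-finitely determined and $F$ is a good deformation, $C(F)$ does not split, so for each fixed $t$ the functions $df_t(\eta_1),\dots,df_t(\eta_m)$ have no common zero on a punctured neighbourhood of $0$; hence the denominator $D_t$ appearing below is strictly positive for $x\neq 0$ near $0$.

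I would then define the candidate field by the sum-of-squares device
\[
V_t=\frac{1}{D_t}\,\dpar{F}{t}\sum_{i=1}^m\overline{df_t(\eta_i)}\,\|x\|^{-2d_i+2(w_1-w_n)}\,\eta_i,\qquad D_t=\sum_{k=1}^m|df_t(\eta_k)|^2\,\|x\|^{-2d_k+2(w_1-w_n)}.
\]
A one-line computation gives $dF(V_t)=D_t^{-1}\,\dpar{F}{t}\sum_i|df_t(\eta_i)|^2\|x\|^{-2d_i+2(w_1-w_n)}=\dpar{F}{t}$, and $V_t$ is tangent to $X$ and vanishes at $0$, being an $\O$-combination of the $\eta_i\in\Theta_X^0$. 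Writing $v_t=\rho\,V_t$ puts $V_t=\sum_j(v_{j_t}/\rho)\,\partial/\partial x_j$ in the strongly rational form demanded by the definition (real analytic off the origin, with continuous extension across it).

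The core of the argument is a weighted bound on the components of $V_t$. From $\mathrm{fil}(\eta_i)=d_i$ one has $\mathrm{fil}(\eta_{i,j})\ge d_i+w_j$, whence $|\eta_{i,j}(x)|\lesssim\|x\|^{d_i+w_j}$. Setting $a_i=|df_t(\eta_i)|\,\|x\|^{-d_i+w_1-w_n}$, the hypothesis reads $|\dpar{F}{t}|\lesssim\sup_i a_i$, and each summand of the $j$-th component of the numerator is controlled by $a_i\,\|x\|^{w_1-w_n+w_j}$. Combining this with $D_t=\sum_k a_k^2$ and the Cauchy--Schwarz bounds $\sup_i a_i\le(\sum_k a_k^2)^{1/2}$, $\sum_i a_i\le\sqrt m\,(\sum_k a_k^2)^{1/2}$ yields
\[
\Big|\frac{v_{j_t}}{\rho}(x)\Big|=|(V_t)_j(x)|\lesssim\|x\|_{\bf w}^{\,w_1-w_n+w_j}\qquad(x\neq 0\ \text{near}\ 0),
\]
with constant independent of $t$. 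This is exactly where the shift $w_1-w_n$ enters: since $\|x\|_{\bf w}^{\alpha}$ behaves like $\max_i|x_i|^{\alpha/w_i}$, the exponent $w_1-w_n+w_j$ makes the smallest effective Hölder exponent $(w_1-w_n+w_j)/w_1\ge 1$, the borderline value forced by $w_j\ge w_n$, so each component vanishes to first order at $0$.

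Finally I would upgrade this size estimate to the genuine Lipschitz property with a $t$-independent constant, which is the main obstacle. Boundedness yields only continuity; Lipschitzness requires bounding the difference quotients, equivalently the derivatives $\partial_{x_k}(v_{j_t}/\rho)$ off the origin. Differentiating the rational expression for $(V_t)_j$ and estimating $\partial_{x_k}$ of each $\|x\|$-power, of $\eta_{i,j}$, and of $D_t$ by the same filtration bookkeeping produces the critical exponent $w_1-w_n+w_j-w_k$, whose minimum over $j,k$ is again $0$ (attained at $j=n,\,k=1$); the truly delicate point is controlling $1/D_t$ and its derivative near $0$, and it is precisely the hypothesis that dominates $\dpar{F}{t}/D_t$ and keeps the quotient bounded, the $w_1-w_n$ correction furnishing the borderline exponent $0$. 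Uniformity in $t$ comes from the uniformity of the constants over the good deformation (no splitting of $C(F)$, giving constant $\mu_{BR}(f_t,X)$). With the components of $V_t=v_t/\rho$ shown Lipschitz with $t$-independent constant and $dF(V_t)=\dpar{F}{t}$, $F$ is strongly rational $\mathscr R_X$-bi-Lipschitz trivial; integrating the Lipschitz field $V_t$ tangent to $X$ then produces the bi-Lipschitz trivialisation, as in the Remark following the definition.
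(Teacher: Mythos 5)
Your construction is the paper's own proof: after cancelling the normalizing powers of $\|x\|_{\bf w}$, your field $V_t$ coincides (up to sign) with the paper's $V=-\frac{1}{\rho^2}\frac{\partial F}{\partial t}\sum_i\overline{\rho_i}\|x\|_{\bf w}^{2e_i}\eta_i$ with $\rho_i=df_t(\eta_i)$ and $e_i=s-d_i$, and you obtain the same component bound $|V_j|\lesssim\|x\|_{\bf w}^{w_j+w_1-w_n}$ and the same derivative-exponent count $w_j+w_1-w_n-w_k\geq w_j-w_n\geq 0$ for the Lipschitz property. The proposal is correct and takes essentially the same approach as the paper, down to the level of detail at which the final Lipschitz verification is only sketched.
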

\begin{proof}  We choose non negative integers $e_i$, $i=1,\ldots ,m$ so
that $d_i+e_i$ is a constant $s$. We define a function $\rho$ by
$\rho^2=\sum_{i=1}^{m}|\rho_i|^2\|x\|_{\bf w}^{2e_i}$, where
$\rho_i=df_t(\eta_i)$, $i=1,\ldots, m$. Since $F$ is a good
deformation it follows that $ C(\rho(x,t))=\{0\}\times \K$. We have
$\rho^2 \frac{\partial F}{\partial t}=df_{t}\left(\frac{\partial
F}{\partial t}\sum^{m}_{i=1}\overline{\rho_{i}}\|x\|_{\bf
w}^{2e_i}\eta_i \right)$, so if we consider $V$ to be the vector field in $\K^n\times
\K,0$ tangent to $X$ defined by
\[V(x,t)=\left\{
\begin{array}{ll}
 -\frac{1}{\rho^2}\frac{\partial F}{\partial
t}(\overline{\rho_{1}}
 \|x\|_{\bf w}^{2e_1}\eta_{1}+...+\overline{\rho_{m}}
 \|x\|_{\bf w}^{2e_m} \eta_{m}) &
 \mbox{if $x\neq 0$}\\
0 & \mbox{if $x=0$}
\end{array}
\right. \] we have $\frac{\partial F}{\partial t}=df_{t}
\left( V \right)$. The vector field $V(x,t)$ is real analytic away from
$\{0\}\times \K$. For $j=1,..,m$ and $i=1,..,m$, let $V_j$ denote
the $j$-th component of $V$, and let $\eta_{ij}$ denote the
$j$-th component of $\eta_i$. Then
\[V_j(x,t)=-\frac{1}{\rho^2}\frac{\partial F}{\partial t}
\left( \overline{\rho_{1}} \|x\|_{\bf w}^{2e_1}\eta_{1j}+...+
\overline{\rho_{m}}\|x\|_{\bf w}^{2e_m} \eta_{mj} \right).\]
 Since fil$(\eta_i)=d_i$, we have
fil$(\eta_{ij})\geq d_i+w_j$, thus $|\eta_{ij}|\lesssim
\|x\|_{\bf w}^{d_i+w_j}$. Then,
\[\begin{array}{ll}|V_j(x,t)|
  &\lesssim
\frac{1}{\rho} |\frac{\partial F}{\partial t}|\|x\|_{\bf w}^{e_1}
\|x\|_{\bf w}^{d_1+w_j} +...+\frac{1}{\rho} |\frac{\partial
F}{\partial t}|  \|x\|_{\bf w}^{e_m}\|x\|_{\bf w}^{d_m+w_j}\\\\
 &\lesssim\frac{1}{\rho} |\frac{\partial
F}{\partial t}|\|x\|_{\bf w}^{s} \|x\|_{\bf w}^{w_j}\\\\
 & \lesssim
\frac{1}{\rho}\sup_i\{|\rho_i|\|x\|_{\bf w}^{-d_{i}}\}\|x\|_{\bf
w}^{s} \|x\|_{\bf w}^{w_j+w_1-w_n} \lesssim  \|x\|_{\bf w}^{w_j+w_1-w_n}.
\end{array}\]
 It follows that $|V_j(x,t)|\leq C \|x\|_{\bf w}^{w_j+w_1-w_n}$, for
$j=1,\ldots,n$ and this implies that the vector field $V$ is continuous.

To show $V_j(x,t)$ is a Lipschitz function for all $j=1, \ldots, n,$ it is sufficient
to verify directly that  for each $k=1, \ldots, n,$
$$\text{fil} ({\frac{\partial V_j}{\partial x_k}}(x,t)) \lesssim  \|x\|_{\bf w}^{w_j+w_1-w_n-w_k} \lesssim  \|x\|_{\bf w}^{w_j-w_n}.$$
Since $w_j\geq w_n$ for all $j=1, \ldots, n,$ this implies that $\frac{\partial V_j}{\partial x_k}(x,t)$ are bounded at 0. The result follows.

\end{proof}


\begin{definition}\label{gerahomo} A germ of an analytic variety $X,0 \subseteq
\K^{n},0$ is weighted homogeneous if it is defined by a weighted
homogeneous map germ $f:\K^{n},0\rightarrow \K^{p},0$. A set of
generators  $\{\eta_{1},...,\eta_{m}\}$ of $\Theta_{X}$ is
weighted homogeneous  of type $(w_{1},...,w_{n}: d_{1},...,d_{m})$
if $\eta_{i}=\sum_{j=1}^{n}\eta_{ij} \frac{\partial}{\partial
x_{j}}$, $w(x_{j})=w_{j}$, $w(\frac{\partial}{\partial
x_{j}})=-w_{j}$ and
fil$(\eta_{i})=d_{i}=\mbox{fil}(\eta_{ij})-w_{j}$, does not
depend on $j=1,...,n$, for each $i=1,\ldots,m$.
\end{definition}

When $X$ is a weighted homogeneous variety, we can always  choose
weighted homogeneous generators for $\Theta_{X}$ (see
\cite{damonfreeness}). Given a weighted homogeneous vector field $\eta=\sum_{j=1}^n\eta^j\frac{\partial}{\partial x_j}$, since the filtration of all the components is the same we call this filtration $\deg(\eta)$.

\begin{ex}
The Euler vector field $\eta_e=\sum_{j=1}^nw_jx_j\frac{\partial}{\partial x_j}$ has $\deg(\eta_e)=0$.
\end{ex}

Given $f$ a polynomial such that $\mbox{fil}(f)=d$, then $\deg(\eta(f))=\deg(df(\eta))=\deg(\eta)+\deg(f)=\deg(\eta)+d$.

The following result is an adaptation of a result by J. Damon in
\cite{Drv}. We include it here as a corollary of Theorem
\ref{centralpesado}.

\begin{coro}\label{homotri} Let $X$ be a weighted homogeneous subvariety
of ${\K}^n,0$ with weights $(w_1,\ldots,w_n)$, $w_1\geq\ldots\geq w_n$, and let $f_0:{\K}^n,0  \to {\K},0$ be weighted
homogeneous with the same weights as $X$, of degree $d$ and ${\mathscr R}_{X}$-finitely
determined. Then any deformation $F$ of $f_0$ by terms of
filtration greater than or equal to $d+w_1-w_n$ is strongly rational
${\mathscr R}_{X}$-bi-Lipschitz trivial.
\end{coro}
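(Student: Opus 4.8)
The plan is to deduce the statement directly from Theorem \ref{centralpesado}, so the whole task reduces to verifying its hypotheses for such a deformation $F$. Since $X$ is weighted homogeneous, by the discussion after Definition \ref{gerahomo} I would first choose weighted homogeneous generators $\eta_1,\ldots,\eta_m$ of $\Theta_X^0$, with $d_i=\deg(\eta_i)=\mbox{fil}(\eta_i)$. As $f_0$ is weighted homogeneous of degree $d$, each $df_0(\eta_i)=\eta_i(f_0)$ is then weighted homogeneous of degree $d+d_i$, a fact I would use repeatedly through the scaling $x\mapsto(\lambda^{w_1}x_1,\ldots,\lambda^{w_n}x_n)$, $\lambda\in\R_{>0}$, under which $\|\cdot\|_{\bf w}$ is multiplied by $\lambda$.

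Next I would produce the two estimates whose combination is exactly the inequality required by Theorem \ref{centralpesado}. For the left-hand side, since $F-f_0$ consists of terms of filtration $\geq d+w_1-w_n$, the same is true of $\partial F/\partial t$, whence $|\partial F/\partial t|\lesssim\|x\|_{\bf w}^{d+w_1-w_n}$ near $0$. For the right-hand side the core is a \L ojasiewicz-type lower bound
\[\sup_{i=1,\ldots,m}|df_0(\eta_i)(x)|\,\|x\|_{\bf w}^{-d_i}\gtrsim\|x\|_{\bf w}^{d}.\]
I would obtain this from weighted homogeneity together with finite determinacy: the function on the left is continuous away from $0$ and homogeneous of weighted degree $d$ for the above scaling, while $\mathscr R_X$-finite determinacy gives, via Lemma \ref{cods}, that $df_0(\Theta_X^0)$ has finite colength in $\m_n$, so the common zero set of the $df_0(\eta_i)$ is $\{0\}$. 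Consequently the left-hand function is strictly positive on the compact weighted unit sphere $\{\|x\|_{\bf w}=1\}$, and homogeneity propagates this to the displayed bound on a whole neighbourhood of $0$. Multiplying by $\|x\|_{\bf w}^{w_1-w_n}$ and comparing with the bound on $\partial F/\partial t$ yields precisely $|\partial F/\partial t|\lesssim\sup_i\{|df_0(\eta_i)|\,\|x\|_{\bf w}^{-d_i+w_1-w_n}\}$.

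The main obstacle is to pass from $f_0$ to the deforming $f_t$ appearing in Theorem \ref{centralpesado}, and, relatedly, to check that $F$ is a good deformation (which the theorem presupposes but the corollary does not assume). The point is that $df_t(\eta_i)=df_0(\eta_i)+(\text{terms of filtration}\geq d+d_i+w_1-w_n)$, so $f_0$ furnishes the lowest-filtration part of each $df_t(\eta_i)$; the robustness of the \L ojasiewicz bound under such higher-order perturbation shows both that the common zero locus of the $df_t(\eta_i)$ cannot split off the $t$-axis, i.e. that $F$ is a good deformation, and that the lower bound survives for $t$ near $0$ with a constant uniform in $t$. The only delicate case is the equal-weight (homogeneous) borderline $w_1=w_n$, where the perturbation may occur at the same filtration; there I would invoke the openness of $\mathscr R_X$-finite determinacy to retain the estimate for small $t$. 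With the inequality of Theorem \ref{centralpesado} thus verified for all small $t$, that theorem gives that $F$ is strongly rational $\mathscr R_X$-bi-Lipschitz trivial.
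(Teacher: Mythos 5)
Your proposal is correct and follows exactly the route the paper intends: the corollary is stated as a direct consequence of Theorem \ref{centralpesado}, and the required inequality $|\partial F/\partial t|\lesssim\sup_i\{|df_t(\eta_i)|\,\|x\|_{\bf w}^{-d_i+w_1-w_n}\}$ is obtained precisely from the filtration bound on the deforming terms together with the weighted-homogeneous \L ojasiewicz lower bound that finite $\mathscr R_X$-determinacy provides. Your extra care about passing from $f_0$ to $f_t$ and about the good-deformation hypothesis (which the paper delegates to Corollary 3.10 of \cite{ruastomazella}) is sound and fills in details the paper leaves implicit.
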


\begin{rem}\label{homo}
In particular, if $f_0$ is a homogeneous ${\mathscr R}_{X}$-finitely
determined germ of degree $d$ and $F:\K^{n}\times \K,0\rightarrow \K,0$ is a
deformation of $f_{0}$ by terms of filtration greater than or equal to $d$, then it is strongly rational ${\mathscr R}_{X}$-bi-Lipschitz trivial (see Corollary 3.10 in \cite{ruastomazella}).
\end{rem}

\begin{ex}\label{ex48}

\begin{itemize}
\item[i)] Take $F$ and $X$ as in Example \ref{ex1} i) with filtration $((7,5,7),35)$. 
The filtration of $u^3v^4$ is $41>37=35+7-5=d+w_1-w_n$ and by Corollary \ref{homotri}, the family is strongly rational ${\mathscr R}_{X}$-bi-Lipschitz trivial.
\item[ii)] Take $F$ and $X$ as in Example \ref{ex1} i) with filtration $((3,3,3,2),6)$. The filtration of $x^2w$ is $8>7=6+3-2=d+w_1-w_n$ and by Corollary \ref{homotri}, the family is strongly rational ${\mathscr R}_{X}$-bi-Lipschitz trivial.
\end{itemize}
\end{ex}

\section{$\mathscr R_X$-bi-Lipschitz rigidity}\label{rigid}

In this section we show that, under a certain condition, in the weighted homogeneous case strongly rational $\mathscr R_X$-bi-Lipschitz triviality implies $\mathscr R_X$-triviality.




Let $X=\varphi^{-1}(0)\subset \C^n$ be a complex hypersurface singularity given by a weighted homogenous $\varphi:(\C^n,0)\to (\C,0)$ of type $((w_1,\ldots,w_n);D)$ such that $LC(X,0)$ is Cohen-Macaulay and let $\Theta_X=\langle \eta_1,\ldots,\eta_r\rangle$ where the $\eta_i$ are all weighted homogeneous. We set $\eta_1=\eta_e$ to be the Euler vector field. Let $F(x,t)$ be a family of polynomial functions such that $f_t:(\C^n,0)\to (\C,0)$ is a weighted homogeneous function-germ with isolated singularity at 0 of type $((w_1,\ldots,w_n);d)$ for all $t\in U$ a domain in $\C$. By Corollary 3.10 in \cite{ruastomazella} the deformation $F$ is a good deformation.

We will deal with the case where $\Theta_X=\Theta_X^0$ and when there is a generic polar curve such that the equations that define it are analytically reduced. Without loss of generality we will assume the polar curve is given by $\Gamma=\{(x,t):\varphi=0,dF(\eta_{2})=\ldots=dF(\eta_{n-1})=0\}$. We also assume that there is no branch of the polar curve in $\{\varphi=0, x_n=0\}$. This assumption is related to the definition of \emph{miniregular} given in Section 4 of \cite{henryparusinski}.

\begin{teo}\label{nvar}
In the above setting, suppose that $w_1\geq w_2\geq \ldots\geq w_{n-1}>w_n$. If $F$ is strongly rational $\mathscr R_X$-bi-Lipschitz trivial, then $F$ is $\mathscr R_X$-analytically trivial. Moreover, if $\deg(\eta_e)<\deg(\eta_{i})$, $i=2,\ldots,r$ and $F(x,t)=f(x)+t\theta(x)$, then $\theta=cf$, where $c$ is a constant.
\end{teo}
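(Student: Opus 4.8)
The plan is to separate the two assertions and to treat the first one (strongly rational $\mathscr R_X$-bi-Lipschitz triviality $\Rightarrow$ $\mathscr R_X$-analytic triviality) as a module-membership problem feeding into the Thom-Levine criterion. First I would unpack the hypothesis: strong rational $\mathscr R_X$-bi-Lipschitz triviality supplies a control function $\rho$ and a vector field $V_t=\frac{1}{\rho}\sum_j\alpha_j\eta_j$ tangent to $X$ with $\frac{\partial F}{\partial t}=df_t(V_t)$, whose components $v_{i_t}/\rho$ are Lipschitz with a Lipschitz constant independent of $t$; equivalently $\rho\,\frac{\partial F}{\partial t}=\sum_j\alpha_j\,df_t(\eta_j)$. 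Since each $f_t$ is weighted homogeneous of the same degree $d$, the germ $\frac{\partial F}{\partial t}$ is weighted homogeneous of degree $d$ as well. As $F$ is a good deformation and $LC(X,0)$ is Cohen-Macaulay, Proposition \ref{thomlevine} will deliver $\mathscr R_X$-analytic triviality as soon as I show $\frac{\partial F}{\partial t}\in df_t(\Theta_X^0)$ for each fixed $t$; this membership is the entire content of the first claim.

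To establish it I would pass to the generic polar curve $\Gamma=\{\varphi=0,\ df_t(\eta_2)=\dots=df_t(\eta_{n-1})=0\}$, along which the expansion $\frac{\partial F}{\partial t}=\sum_j\beta_j\,df_t(\eta_j)$, with $\beta_j=\alpha_j/\rho$, reduces to the contributions of $\eta_1=\eta_e$ and of $\eta_n,\dots,\eta_r$. On each branch of $\Gamma$, parametrized by a uniformizing parameter, I would compare the filtrations of $\frac{\partial F}{\partial t}$ and of the $df_t(\eta_i)$ and exploit the uniform Lipschitz bound on the components of $V_t$, the reducedness of the polar equations, the miniregularity hypothesis (no branch of $\Gamma$ inside $\{\varphi=0,\ x_n=0\}$), and the strict inequality $w_{n-1}>w_n$. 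The role of the strict inequality is to single out $x_n$ as the unique smallest-weight direction transverse to $\Gamma$, so that the Lipschitz control of the $x_n$-component of $V_t$ pins the polar rates down tightly enough to dominate $\frac{\partial F}{\partial t}$ by the $df_t(\eta_i)$ along every branch, without the slack $w_1-w_n$ that appears in Theorem \ref{centralpesado}. Interpreting such polar-branch domination through integral closure and using weighted homogeneity, this upgrades to the genuine membership $\frac{\partial F}{\partial t}\in df_t(\Theta_X^0)$, and Proposition \ref{thomlevine} then finishes the first part.

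For the rigidity statement I would argue by a weighted-degree count. Writing $F=f+t\theta$, the first part gives $\theta=\frac{\partial F}{\partial t}\in df_t(\Theta_X^0)$, hence $\theta=\sum_i g_i\,df_t(\eta_i)$ for germs $g_i\in\O_n$. Each summand is weighted homogeneous with $\deg\bigl(df_t(\eta_i)\bigr)=d+\deg(\eta_i)$ while $\deg(\theta)=d$; since $\deg(\eta_e)=0$ and, by hypothesis, $\deg(\eta_e)<\deg(\eta_i)$ for $i\ge 2$, the only generator whose image can contribute in degree $d$ is $\eta_e$, and only through the constant term of $g_e$. Thus $\theta=c(t)\,df_t(\eta_e)=c(t)\,d\,f_t$ by Euler's relation. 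Substituting $f_t=f+t\theta$ gives $\theta\bigl(1-c(t)dt\bigr)=c(t)\,d\,f$, and since $\theta$ and $f$ do not depend on $t$ the scalar $c(t)d/(1-c(t)dt)$ must be a constant $c$; therefore $\theta=cf$.

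I expect the middle step to be the main obstacle: passing from the metric, real-analytic Lipschitz data (which lives at the level of $\rho$, the $\alpha_j$, and complex conjugates) to the complex-analytic membership $\frac{\partial F}{\partial t}\in df_t(\Theta_X^0)$. This is precisely where the polar-curve techniques of Henry and Parusinski \cite{henryparusinski}, as used by Fernandes and the second author \cite{fernandesruas} in the non-relative case, must be transported to the relative module $df_t(\Theta_X^0)$, and where the hypotheses $w_{n-1}>w_n$, reducedness of the polar equations, and miniregularity become indispensable; a natural route is to reduce, by slicing along generic planes through the polar directions, to the plane-curve rigidity already available when $n=2$.
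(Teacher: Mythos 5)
Your skeleton matches the paper's: extract $\rho\,\frac{\partial F}{\partial t}=\sum_j\alpha_j\,dF(\eta_j)$ from the hypothesis, restrict to the generic polar curve $\Gamma$, use the Lipschitz control on the $x_n$-component together with $w_{n-1}>w_n$, upgrade to $\frac{\partial F}{\partial t}\in dF(\Theta_X^0)$, and finish with Proposition \ref{thomlevine}; your degree count for the rigidity statement is also essentially the paper's. However, you have deferred exactly the step that constitutes the proof. The paper's mechanism is concrete: parametrize the branches $\gamma_j(s)=(a_{1_j}(t)s^{w_1},\ldots,a_{(n-1)_j}(t)s^{w_{n-1}},s^{w_n},t)$; the Lipschitz bound gives $|V_n(\gamma_j(s))-V_n(\gamma_l(s))|\lesssim|s|^{w_{n-1}}$, while writing $V_n=v_n/\rho$ as a quotient of polynomials of weighted degrees $2k+d$ and $2k+d-w_n$ shows this difference is exactly $|k_j(t)-k_l(t)|\,|s|^{w_n}$; since $w_n<w_{n-1}$ this forces $k_j(t)=k_l(t)$, so on $\Gamma$ one gets $V_n=k(t)L(x_n,\overline{x}_n)$, and the degree hypothesis on the $\eta_i$ identifies $\alpha_1/\rho=k(t)/w_n$. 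The remaining terms $\sum_{j\ge n}\frac{\alpha_j}{\rho}dF(\eta_j)$ are then killed not by integral closure but by a divisibility argument: the left side $\frac{\partial F}{\partial t}-\frac{k(t)}{w_n}dF(\eta_e)$ is a polynomial, $\rho$ is irreducible of degree $2k$ and divides neither $\alpha_j$ (degree $2k-\epsilon_j<2k$) nor $dF(\eta_j)$ (degree $d+\epsilon_j<2k$), so the sum must vanish identically on $\Gamma$; reducedness of the polar equations then lifts the identity to $X$, and a final degree comparison eliminates the generators with $\deg(\eta_i)>\deg(\eta_e)$. A separate STEP 2 is needed when some $\eta_j$, $j>n-1$, share the degree of $\eta_e$, since the first claim of the theorem does not assume $\deg(\eta_e)<\deg(\eta_i)$ for all $i$.

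Your proposed substitute for this chain --- ``interpreting polar-branch domination through integral closure'' --- would not close the gap: integral-closure membership of $\frac{\partial F}{\partial t}$ in $dF(\Theta_X^0)$ does not imply actual membership in that module, and actual membership is what Proposition \ref{thomlevine} requires. Likewise, reducing to the plane-curve case by slicing is not how the paper proves Theorem \ref{nvar}; in the paper that slicing only yields a necessary condition (an obstruction to triviality via $(X,F)$-generic planes), not the full $n$-variable rigidity. So the proposal correctly identifies all the ingredients and where they must enter, but the central analytic-to-algebraic passage is asserted rather than proved, and the one mechanism you name for it would fail.
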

\begin{proof}
If $F$ is strongly rational $\mathscr R_X$-bi-Lipschitz trivial, there exists a control function $\rho$ of degree $2k$ big enough and there exists a Lipschitz vector field $V$ tangent to $X$ such that $\frac{\partial F}{\partial t}=dF(V)$, where $\rho V=\alpha_1\eta_e+\ldots+\alpha_r\eta_r$. This means that $$\rho\frac{\partial F}{\partial t}=\alpha_1dF(\eta_e)+\ldots+\alpha_rdF(\eta_r).$$

There exist non negative integers $\epsilon_i$, $i=2,\ldots,r$, such that $\deg(\eta_i)=\epsilon_i$ and so $\deg(dF(\eta_i))=d+\epsilon_i$. Since the left hand side of the above equation is a polynomial of degree $2k+d$, necessarily $\deg(\alpha_i)=2k-\epsilon_i$ or $\alpha_i=0$.

We consider a generic polar curve on $X$ which is not included in $F^{-1}(0)$ given by $\Gamma=\{(x,t):\varphi=0,dF(\eta_2)=\ldots=dF(\eta_{n-1})=0\}$.

Since there is no branch of the polar curve $\Gamma_t$ in $\{\varphi=0,x_n=0\}$, we call $(a_{1_j}(t),\ldots,a_{{(n-1)}_j}(t))$ the roots of $\{\varphi(x_1,\ldots,x_{n-1},1,t)=0,dF(\eta_2)(x_1,\ldots,x_{n-1},1,t)=\ldots=dF(\eta_{n-1})(x_1,\ldots,x_{n-1},1,t)=0\}$. The branches of the polar curve can be parameterised as $$\gamma_j(s)=(a_{1_j}(t)s^{w_1},\ldots,a_{{(n-1)}_j}(t)s^{w_{n-1}},s^{w_n},t).$$


Since $V_n=\frac{v_n}{\rho}$ is Lipschitz
\begin{align*}
&|\frac{v_n}{\rho}(\gamma_j(s))-\frac{v_n}{\rho}(\gamma_l(s))|\leq \lambda(t)|\gamma_j(s)-\gamma_l(s)|\\
&\leq \lambda(t)(|a_{1_j}(t)-a_{1_l}(t)||s|^{w_1}+\ldots+|a_{{(n-1)}_j}(t)-a_{{(n-1)}_l}(t)||s|^{w_{n-1}})\\
&\leq \lambda(t)(|a_{1_j}(t)-a_{1_l}(t)||s|^{w_1-w_{n-1}}+\ldots+|a_{{(n-1)}_j}(t)-a_{{(n-1)}_l}(t)|)|s|^{w_{n-1}}\\
&\leq \tilde\lambda(t)|s|^{w_{n-1}}.
\end{align*}

On the other hand, on the polar curve, $\sum_{i=1}^{n-1}v_i\frac{\partial F}{\partial x_i}=\alpha_1dF(\eta_e)+\alpha_ndF(\eta_n)+\ldots +\alpha_rdF(\eta_r)$, and so $v_i=\alpha_1\eta_e^i+\alpha_n\eta_n^i+\ldots+\alpha_r\eta_r^i$. Therefore, we have
\begin{align*}
\frac{v_n}{\rho}&=\frac{\frac{\partial F}{\partial t}-\sum_{i=1}^{n-1}\frac{v_i}{\rho}\frac{\partial F}{\partial x_i}}{\frac{\partial F}{\partial x_n}}=\frac{\rho\frac{\partial F}{\partial t}-\sum_{i=1}^{n-1}v_i\frac{\partial F}{\partial x_i}}{\rho\frac{\partial F}{\partial x_n}}\\
&=\frac{\rho\frac{\partial F}{\partial t}-\sum_{i=1}^{n-1}(\alpha_1\eta_e^i+\alpha_n\eta_n^i+\ldots+\alpha_r\eta_r^i)\frac{\partial F}{\partial x_i}}{\rho\frac{\partial F}{\partial x_n}}.
\end{align*}


Now, the numerator of the previous equation is a polynomial of degree $2k+d$ and the denominator is a polynomial of degree $2k+d-w_n$, therefore
$$|\frac{v_n}{\rho}(\gamma_j(s))-\frac{v_n}{\rho}(\gamma_l(s))|=|k_j(t)-k_l(t)||s|^{w_n},$$ where $k_j$ and $k_l$ are constant in $s$. We then have that $|k_j(t)-k_l(t)||s|^{w_n}\leq \tilde\lambda(t)|s|^{w_{n-1}}$, and this is only possible if $k_j(t)=k_l(t)$, i.e. this constant term in $s$ does not depend on the branch of the polar curve. So, on $\Gamma$, we can write $\frac{v_n}{\rho}(x,t)=k(t)L(x_n,\overline{x}_n)$, where $L$ is a linear function (since $w_n$ is the lowest weight and $v_n$ is real analytic).

On the other hand $k(t)L(x_n,\overline{x}_n)=\frac{v_n}{\rho}=\frac{\alpha_1}{\rho}\eta_e^n+\frac{\alpha_n}{\rho}\eta_n^n+\ldots+\frac{\alpha_r}{\rho}\eta_r^n$.

STEP 1) Suppose first that $\deg(\eta_e)<\deg(\eta_i)$, $i=n,\ldots,r$. Then the only vector field out of $\eta_e,\eta_n,\ldots,\eta_r$ with a multiple of $x_n$ (and hence weight $w_n$) in the last component is the Euler vector field, so we get that on the polar curve $\frac{\alpha_1}{\rho}w_nx_n=k(t)L(x_n,\overline{x}_n)$, so $L(x_n,\overline{x}_n)$ must be $x_n$ and so $\frac{\alpha_1}{\rho}=\frac{k(t)}{w_n}$.

This implies that on each branch of the polar curve we have $$\frac{\partial F}{\partial t}-\frac{k(t)}{w_n}dF(\eta_e)=\frac{\alpha_n}{\rho}dF(\eta_n)+\ldots+\frac{\alpha_r}{\rho}dF(\eta_r).$$ The term $k(t)$ is an algebraic expression of the functions $a_{j_i}(t)$, however, following Remark 3.3 in \cite{fernandesruas}, with an appropriate change of variables which depends on the Puiseux expansions of the $a_{j_i}(t)$ we can make it analytic. For simplicity we keep the notation and suppose $k(t)$ is complex analytic.

The left hand side of the above expression is a complex polynomial of degree $d$, so the right hand side must be a complex polynomial. The degree of $\alpha_j$ is $2k-\epsilon_j$, and since $\deg(\eta_e)<\deg(\eta_i)$, $i=n,\ldots,r$, $\epsilon_j>0$. Therefore $\rho$ does not divide $\alpha_j$, $j=n,\ldots,r$. On the other hand the degree of $dF(\eta_j)$ is $d+\epsilon_j<2k$, so $\rho$ does not divide $dF(\eta_j)$ either. Since $\rho$ is irreducible, this means that $\rho$ does not divide $\alpha_jdF(\eta_j)$ for $j=n,\ldots,r$ and thus it does not divide $\alpha_ndF(\eta_n)+\ldots+\alpha_rdF(\eta_r)$. Hence, this polynomial must be 0 and so, on the polar curve $$\frac{\partial F}{\partial t}-\frac{k(t)}{w_n}dF(\eta_e)=0.$$

Now, since the equations of the polar curve are analytically reduced, we get that on $X$  $$\frac{\partial F}{\partial t}-\frac{k(t)}{w_n}dF(\eta_e)=\lambda_2(x,t)dF(\eta_2)+\ldots\lambda_{n-1}(x,t)dF(\eta_{n-1}),$$ where the $\lambda_i$ are complex polynomials in $\O_{n+1}$. For those $\eta_i$ such that $\deg(\eta_e)<\deg(\eta_i)$, the degrees of the $dF(\eta_i)$ are greater than the degree of the polynomial on the left hand side. Call $\eta_{i_1},\ldots,\eta_{i_m}$ those $\eta_i$, $i\in\{2,\ldots,n-1\}$, such that $\deg(\eta_e)=\deg(\eta_i)$. Then $\sum_{j\neq i_1,\ldots,i_m}\lambda_{j}(x,t)dF(\eta_{j})=0$.

Finally we get that, on $X$, $$\frac{\partial F}{\partial t}=\frac{k(t)}{w_n}dF(\eta_e)+\sum_{j=1}^{m}\lambda_{i_j}(x,t)dF(\eta_{i_j}).$$ This means that $\frac{\partial F}{\partial t}\in dF(\Theta_X^0)$ and by Proposition \ref{thomlevine} $F$ is analytically trivial.

Furthermore, if $\deg(\eta_e)<\deg(\eta_{i})$, $i=2,\ldots,r$ and $F(x,t)=f(x)+t\theta(x)$, we get $\frac{\partial F}{\partial t}=\theta(x)=\frac{k(t)}{w_n}dF(\eta_e)=\frac{k(t)d}{w_n}\cdot F$, and since $\theta$ does not depend on $t$, necessarily $\theta(x)=c\cdot f(x)$, for a constant $c$.

STEP 2) Suppose that there exist $\eta_{j_1},\ldots,\eta_{j_k}$, with $j_1,\ldots,j_k>n-1$, with the same degree as $\eta_e$. Then, we get that on the polar curve $k(t)L(x_n,\overline{x}_n)=(\alpha_1w_nx_n+\alpha_{j_1}\eta_{j_1}^n+\ldots+\alpha_{j_k}\eta_{j_k}^n)/\rho$. Since the $\eta_{j_i}^n$ are complex analytic, necessarily $\eta_{j_i}^n=w_{j_i}x_n$, which forces $L(x_n,\overline{x}_n)=ax_n$ for a constant $a$. Thus $\alpha_1/\rho=ak(t)/w_n-(\alpha_{j_1}w_{j_1}/w_n+\ldots+\alpha_{j_k}w_{j_k}/w_n)/\rho$. We then have
$$\frac{\partial F}{\partial t}=\frac{ak(t)}{w_n}dF(\eta_e)+\frac{\alpha_{j_1}}{\rho}(\frac{w_{j_1}}{w_n}dF(\eta_{e})+dF(\eta_{j_1}))+\ldots+\frac{\alpha_{j_k}}{\rho}(\frac{w_{j_k}}{w_n}dF(\eta_{e})+dF(\eta_{j_k})).$$

    Since the left hand side is a polynomial, so is the right hand side, and so $\frac{\alpha_{j_i}}{\rho}=k_{j_i}(t)$ are constant in $s$ for $i=1,\ldots,k$. Again, considering the Puiseux expansion of the $k_{j_i}(t)$ we can use a common change of variable to make all of them (together with $k(t)$) analytic.
    In this case we proceed as in the proof of STEP 1 and we get that on the polar curve $$\frac{\partial F}{\partial t}-(\frac{ak(t)}{w_n}+k_{j_1}(t)\frac{w_{j_1}}{w_n}+\ldots+k_{j_k}(t)\frac{w_{j_k}}{w_n})dF(\eta_e)-k_{j_1}(t)dF(\eta_{j_1})-\ldots-k_{j_k}(t)dF(\eta_{j_k})=0.$$ Now continue as in STEP 1.

\end{proof}

\begin{rem}
\begin{enumerate}
\item[i)] The hypothesis of the equations that define the polar curve $\Gamma=\{(x,t):\varphi=0,dF(\eta_2)=\ldots=dF(\eta_{n-1})=0\}$ being analytically reduced is equivalent to asking the ideal given by $\{\varphi,dF(\eta_2),\ldots,dF(\eta_{n-1})\}$ to be radical.
\item[ii)] In the case $X=\C^{n-1}$ we have $\Theta_X=\langle\frac{\partial}{\partial x_1},\ldots,\frac{\partial}{\partial x_{n-1}},x_n\frac{\partial}{\partial x_{n}}\rangle$. The hypothesis $w_1>\ldots>w_n$ used in \cite{fernandesruas} implies $\deg(\frac{\partial}{\partial x_1})=-w_1<\ldots<\deg(\frac{\partial}{\partial x_{n-1}})=-w_{n-1}<\deg(x_n\frac{\partial}{\partial x_n})=0$. In particular, when $X=\C^2$ then we recover the result in \cite{fernandesruas}.
\item[iii)] In the case $n=2$ we consider the whole of $X$ to be the polar curve.
\end{enumerate}
\end{rem}

\begin{ex}
Consider $X_{\text {hom}}\subset\C^3$ given by $\varphi_{\text {hom}}(x,y,z)=x^2+y^2+z^2$ and take the family $F_{\text {hom}}(x,y,z)=2x^2-y^2-3z^2+tx^2$. By Remark \ref{homo}, $F_{\text hom}$ is strongly rational $\mathscr R_{X_{\text {hom}}}$-bi-Lipschitz trivial for some tangent vector field $v_{t_{\text {hom}}}$.

One may ask what happens when we add a new variable and break the homogeneity but keep a weighted homogeneity. Consider now $X\subset\C^4$ given by $\varphi(x,y,z,w)=x^2+y^2+z^2+w^3$ as in Example \ref{ex1} and $F(x,y,z,w)=2x^2-y^2-3z^2+w^3+tx^2$. Certainly the equation $\frac{\partial F}{\partial t}=dF(v_t/\rho)$ can be solved for some control function $\rho$ and a real analytic vector field $v_t$ tangent to $X$ such that when $w=0$ we recover the $v_{t_{\text {hom}}}$ of the homogeneous case above. The problem is that the new component $v_{t_4}$ may not satisfy that $\frac{v_{t_4}}{\rho}$ is a Lipschitz function. A simple study of the degrees of the components of $v_t$ show that the partial derivatives of $\frac{v_{t_4}}{\rho}$ with respect to $x,y,z$ have to be zero in order to be bounded.

We recall that $\Theta_X$ is generated by
\begin{align*}
\eta_{e}=&3x\frac{\partial}{\partial x}+3y\frac{\partial}{\partial y}+3z\frac{\partial}{\partial z}+2w\frac{\partial}{\partial w},\\
\eta_{2}=&2y\frac{\partial}{\partial x}-2x\frac{\partial}{\partial y},\eta_{3}=2z\frac{\partial}{\partial x}-2x\frac{\partial}{\partial z},\\
\eta_{4}=&3w^2\frac{\partial}{\partial x}-2x\frac{\partial}{\partial w},\eta_{5}=3w^2\frac{\partial}{\partial y}-2y\frac{\partial}{\partial w},\\
\eta_{6}=&2z\frac{\partial}{\partial y}-2y\frac{\partial}{\partial z},\eta_{7}=3w^2\frac{\partial}{\partial z}-2z\frac{\partial}{\partial w},
\end{align*}

Firstly notice that $\deg(\eta_2)=\deg(\eta_3)=\deg(\eta_6)=\deg(\eta_e)$. A generic polar curve is given by $\Gamma=\{(x,y,z,w,t):\varphi=dF(\eta_2)=dF(\eta_7)=0\}=\{(x,y,z,w,t): \varphi=xy=zw^2=0\}$, so the hypotheses of Theorem \ref{nvar} are satisfied. However, we cannot use this Theorem directly to know whether $F$ is strongly rational $\mathscr R_X$-bi-Lipschitz trivial.

Even so we may consider $X_0\subset\C^2$ given by $\varphi_0(x,w)=x^2+w^3$, which is obtained from $X$ by taking the section $y=z=0$. This is a branch of the polar curve $\Gamma$. Notice that projecting vector fields in $\Theta_X$ to the $x,w$-plane gives exactly $\Theta_{X_0}=\langle 3x\frac{\partial}{\partial x}+2w\frac{\partial}{\partial w},\eta_{4}=3w^2\frac{\partial}{\partial x}-2x\frac{\partial}{\partial w}\rangle$, therefore if $F(x,y,z,w)=2x^2-y^2-3z^2+w^3+tx^2$ were strongly rational $\mathscr R_X$-bi-Lipschitz trivial, then $F_0(x,w)=2x^2+w^3+tx^2$ would be strongly rational $\mathscr R_{X_0}$-bi-Lipschitz trivial. However, $X_0$ and $F_0$ satisfy the hypotheses of Theorem \ref{nvar} since $\deg(\eta_4)>\deg(\eta_e)$, but $\theta=x^2\neq c(2x^2+w^3)$ for some constant $c$. Therefore $F_0$ is not strongly rational $\mathscr R_{X_0}$-bi-Lipschitz trivial and so $F$ is not strongly rational $\mathscr R_{X}$-bi-Lipschitz trivial.

On one hand this example shows that reduction to a homogeneous case does not ensure strongly rational $\mathscr R_{X}$-bi-Lipschitz triviality. On the other hand, it suggests how cases where our hypotheses are not satisfied can be reduced to cases where they are.
\end{ex}

This example suggests the following

\begin{definition}
Let $X=\varphi^{-1}(0)\subset \C^n$ be a complex hypersurface singularity given by a weighted homogenous $\varphi:(\C^n,0)\to (\C,0)$ of type $((w_1,\ldots,w_n);D)$ and let $F(x,t)$ be a family of polynomial functions such that $f_t:(\C^n,0)\to (\C,0)$ is a weighted homogenous function-germ with isolated singularity at 0 of type $((w_1,\ldots,w_n);d)$. Let $H=\{x\in\C^n : x_{i_1}=\ldots=x_{i_k}=0\}$ be the transversal intersection of coordinate hyperplanes for $1<k<n$. We say that $H$ is \emph{$(X,F)$-generic} if $X_0=X\cap H$  and $F_0=F|_{H}$ satisfy that $F_0$ is not homogeneous and that $\mu_{BR}(F_0,X_0)<\infty$. Notice that in this setting $\pi(\Theta_X)=\Theta_{X_0}$, where $\pi:\theta_n\to\theta_{n-k}$ is the projection of vector fields in $\theta_n$ to $H$ (making $x_{i_1}=\ldots=x_{i_k}=0$ and eliminating the corresponding components).
\end{definition}

This definition seems natural in the bi-Lipschitz context. Compare it for example with the definition of admissible $(n-k+1)$-planes in \cite{denkowskitibar}.

\begin{prop}
Let $X=\varphi^{-1}(0)\subset \C^n$ be a complex hypersurface singularity given by a weighted homogenous $\varphi:(\C^n,0)\to (\C,0)$ and $F(x,t)$ a family of polynomial functions such that $f_t:(\C^n,0)\to (\C,0)$ is weighted homogenous of the same type of $X$ with isolated singularity at 0. Suppose that $H$ is an $(X,F)$-generic $(n-k)$-plane. If $F$ is strongly rational $\mathscr R_{X}$-bi-Lipschitz trivial, then $F_0$ is strongly rational $\mathscr R_{X_0}$-bi-Lipschitz trivial.
\end{prop}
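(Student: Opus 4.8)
The plan is to transport the entire bi-Lipschitz trivialization of $(F,X)$ to $H$ by restriction and the projection $\pi$, and to check that the result trivializes $(F_0,X_0)$. I would write the hypothesis in the form
$$\rho\,\frac{\partial F}{\partial t}=\sum_{j=1}^{r}\alpha_j\,dF(\eta_j),\qquad v_t=\sum_{j=1}^{r}\alpha_j\eta_j,\qquad V_t=\frac{v_t}{\rho},$$
where $\rho$ is a control function, $\eta_1,\dots,\eta_r$ are weighted homogeneous generators of $\Theta_X$, the $\alpha_j$ are real analytic, and each $v_{i_t}/\rho$ is Lipschitz with a $t$-independent constant. I would then propose $\rho_0=\rho|_H$ as control function on $H$, $(v_0)_t=\pi(v_t)$ as the new vector field, and $(V_0)_t=(v_0)_t/\rho_0$, and show that this data exhibits the strongly rational $\mathscr R_{X_0}$-bi-Lipschitz triviality of $F_0$.

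First I would dispatch the three structural requirements. Since $\rho=\sum_i |x_i|^{2a_i}$, the restriction $\rho_0=\rho|_H$ is a sum of the same type over the surviving coordinates, hence again a control function on $H$. Because $v_t=\sum_j\alpha_j\eta_j$ and $\pi(\Theta_X)=\Theta_{X_0}$, the projected field $(v_0)_t=\sum_j(\alpha_j|_H)\,\pi(\eta_j)$ is a real analytic combination of generators of $\Theta_{X_0}$ and is therefore tangent to $X_0$. For each surviving index $i$ we have $(v_0)_{i_t}/\rho_0=(v_{i_t}/\rho)|_H$, the restriction to the linear subspace $H$ of a Lipschitz function, so it is Lipschitz with the same (in particular $t$-independent) constant. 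Thus only the infinitesimal equation remains.

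The crux, and the step I expect to be the main obstacle, is the homological identity $\partial F_0/\partial t=dF_0((V_0)_t)$ on $H$. Restricting $\partial F/\partial t=\sum_{i=1}^{n}(v_{i_t}/\rho)\,\partial F/\partial x_i$ to $H$ and splitting the index set into the surviving indices and the eliminated ones $i_1,\dots,i_k$ gives
$$\frac{\partial F_0}{\partial t}=dF_0\big((V_0)_t\big)+\sum_{l=1}^{k}\frac{v_{(i_l)_t}|_H}{\rho_0}\,\Big(\frac{\partial F}{\partial x_{i_l}}\Big)\Big|_H,$$
so the proof comes down to showing that the correction sum vanishes on $H$. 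This is precisely where the $(X,F)$-genericity of $H$ must be used: I would establish, as a lemma, that for such an $H$ the weighted homogeneous polynomial $F$ carries no monomial that is linear in a single eliminated variable and otherwise supported on the surviving ones, so that $\partial F/\partial x_{i_l}\in(x_{i_1},\dots,x_{i_k})$ and hence $(\partial F/\partial x_{i_l})|_H=0$ for every eliminated $i_l$. This is exactly the phenomenon seen in the computation of the preceding example, where $(\partial F/\partial y)|_H=(\partial F/\partial z)|_H=0$ killed every correction term. The delicate point is that this vanishing is a genuine constraint on the cross terms of $F$, since the tangency of the $\eta_j$ to $X$ by itself only controls the corresponding derivatives of $\varphi$; so the argument must genuinely extract it from the defining conditions $\mu_{BR}(F_0,X_0)<\infty$ and $\pi(\Theta_X)=\Theta_{X_0}$. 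Once the correction vanishes, the homological equation holds on all of $H$, and $(\rho_0,(V_0)_t)$ witnesses the strongly rational $\mathscr R_{X_0}$-bi-Lipschitz triviality of $F_0$.
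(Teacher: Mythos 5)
Your overall strategy is exactly the paper's: restrict the control function to $H$ to get $\rho_0$, project $v_t$ by $\pi$ to get a field tangent to $X_0$ (using $\pi(\Theta_X)=\Theta_{X_0}$), note that the restriction of a Lipschitz function to a linear subspace is Lipschitz with the same constant, and restrict the homological equation. The paper's proof consists of precisely these steps and then simply asserts $\frac{\partial F_0}{\partial t}=dF_0(V_{t_0})$; it never mentions the correction sum $\sum_{l=1}^{k}\frac{v_{(i_l)_t}|_H}{\rho_0}\bigl(\frac{\partial F}{\partial x_{i_l}}\bigr)\big|_H$ that you isolate. So you have correctly located the one step that is not purely formal.

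However, your proposal does not close that step, and the lemma you reduce it to is not a consequence of the stated hypotheses. $(X,F)$-genericity of $H$ only asks that $F_0$ be non-homogeneous, that $\mu_{BR}(F_0,X_0)<\infty$, and that $\pi(\Theta_X)=\Theta_{X_0}$; none of these prevents the weighted homogeneous $F$ from containing a cross term $x_{i_l}\,m(x_{k+1},\ldots,x_n)$ with $w_{i_l}+\mathrm{fil}(m)=d$, in which case $(\partial F/\partial x_{i_l})|_H\neq 0$. Nor do the eliminated components of the vector field vanish on $H$ in general: in the paper's own example $\eta_2=2y\frac{\partial}{\partial x}-2x\frac{\partial}{\partial y}$ has $y$-component $-2x$, which is nonzero on $\{y=z=0\}$, so one cannot argue that the factor $v_{(i_l)_t}|_H$ kills the correction either. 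As written, your argument (and, to be fair, the paper's) proves the proposition only under the additional assumption that each $\partial F/\partial x_{i_l}$ lies in the ideal $(x_{i_1},\ldots,x_{i_k})$, which does hold in the motivating example but is a genuine restriction on the cross terms of $F$. To complete the proof you would need to either derive this from the genericity conditions (I do not see how), add it as an explicit hypothesis, or replace $\pi(v_t)$ by a corrected vector field tangent to $X_0$ that absorbs the extra terms.
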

\begin{proof}
Suppose that $F$ is strongly rational $\mathscr R_{X}$-bi-Lipschitz trivial, then there exists a control function $\rho$ and a family of real analytic vector fields $v_t$ tangent to $X$ such that $V_t=\sum_{i=1}^n\frac{v_{i_t}}{\rho}\frac{\partial}{\partial x_i}$ satisfies that $\frac{v_{i_t}}{\rho}$ are Lipschitz functions and $$\frac{\partial F}{\partial t}=dF(V_t).$$ Since $\pi(\Theta_X)=\Theta_{X_0}$, $v_{t_0}=\pi(v_t)$ is tangent to $X_0$. For simplicity suppose $H=\{x\in\C^n : x_{1}=\ldots=x_{k}=0\}$ and set $\rho_0(x_{k+1},\ldots,x_n)=\rho(0,\ldots,0,x_{k+1},\ldots,x_n)$, then $\frac{v_{i_{t_0}}}{\rho_0}$ are Lipschitz functions and $$\frac{\partial F_0}{\partial t}=dF_0(V_{t_0}),$$ where $V_{t_0}=\sum_{i=k+1}^n\frac{v_{i_{t_0}}}{\rho_0}\frac{\partial}{\partial x_i}$. Hence $F_0$ is strongly rational $\mathscr R_{X_0}$-bi-Lipschitz trivial.
\end{proof}

\begin{coro}
Let $X$, $F$ and $H=\{x_1=\ldots=x_{n-2}=0\}$ be as above, suppose that $X_0$ is a plane curve with isolated singularity with $w_{n-1}>w_{n}$ and $\deg(\eta_2)>\deg(\eta_e)$. If $F_0=f_0+t\theta_0$ satisfies that $\theta_0\neq cf_0$ for some constant $c$, then $F$ is not strongly rational $\mathscr R_{X}$-bi-Lipschitz trivial.
\end{coro}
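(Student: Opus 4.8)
The plan is to argue by contradiction, reducing the $n$-variable problem to the plane curve $X_0$ and then invoking the rigidity already established in Theorem \ref{nvar}. Suppose, then, that $F$ \emph{were} strongly rational $\mathscr R_X$-bi-Lipschitz trivial. Since $H=\{x_1=\ldots=x_{n-2}=0\}$ is, by hypothesis, an $(X,F)$-generic $(n-2)$-plane, the preceding Proposition applies and gives at once that $F_0=F|_H$ is strongly rational $\mathscr R_{X_0}$-bi-Lipschitz trivial. The aim is then to show that this alone forces $\theta_0=cf_0$, contradicting the standing assumption $\theta_0\neq cf_0$.

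To apply Theorem \ref{nvar} to the pair $(X_0,F_0)$ I would first verify that all its hypotheses hold in the plane curve setting. Here $X_0\subset\C^2$ is a reduced plane curve germ (isolated singularity forces reducedness), and every reduced curve in $\C^2$ is a free divisor, so $\Theta_{X_0}$ is a free $\O_2$-module of rank $2$ and, by \cite{bruceroberts}, $LC(X_0,0)$ is Cohen-Macaulay. Being weighted homogeneous, $\Theta_{X_0}$ admits weighted homogeneous generators, and we may write $\Theta_{X_0}=\langle \eta_e,\eta_2\rangle$ with $\eta_e$ the Euler field. Since $F_0$ is weighted homogeneous with respect to $(w_{n-1},w_n)$ and $\mu_{BR}(F_0,X_0)<\infty$ by the very definition of $(X,F)$-genericity, $f_0$ is $\mathscr R_{X_0}$-finitely determined and $F_0$ is a good deformation of it (Corollary 3.10 in \cite{ruastomazella}). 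Moreover we are in the case $n=2$, so by Remark (iii) following Theorem \ref{nvar} the whole of $X_0$ plays the role of the polar curve and the reducedness hypotheses on the polar equations, together with the condition that no branch lie in $\{x_n=0\}$, are automatic (the latter being ensured by the choice of coordinate $x_n$ of lowest weight).

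The crucial observation is that, because $\Theta_{X_0}$ has exactly the two generators $\eta_e$ and $\eta_2$, the single inequality $\deg(\eta_2)>\deg(\eta_e)$ in the hypothesis is \emph{precisely} the condition $\deg(\eta_e)<\deg(\eta_i)$ for all $i=2,\ldots,r$ required in the second (``Moreover'') part of Theorem \ref{nvar}. Applying that part to $F_0=f_0+t\theta_0$, with $w_{n-1}>w_n$, therefore yields $\theta_0=cf_0$ for some constant $c$. This contradicts the standing assumption $\theta_0\neq cf_0$, and hence $F$ cannot be strongly rational $\mathscr R_X$-bi-Lipschitz trivial.

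The main obstacle is not the logical skeleton, which is a two-step contrapositive, but confirming that the hypotheses of Theorem \ref{nvar} genuinely transfer to $(X_0,F_0)$. One must be sure that the freeness of $\Theta_{X_0}$ collapses the degree condition to the single inequality $\deg(\eta_2)>\deg(\eta_e)$, and that the projected generators $\pi(\eta_e),\pi(\eta_2)$ of $\Theta_{X_0}=\pi(\Theta_X)$ retain the Euler field and the required degree ordering, so that the finite determinacy and good-deformation properties carried along by $(X,F)$-genericity suffice to run the rigidity argument on the curve. Once these are in place, the conclusion is immediate.
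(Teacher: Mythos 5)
Your proposal is correct and follows essentially the same route as the paper: the paper likewise observes that with $n-k=2$ and $w_{n-1}>w_n$ the hypotheses of Theorem \ref{nvar} hold for $(X_0,F_0)$, so $\theta_0\neq cf_0$ forces $F_0$ not to be strongly rational $\mathscr R_{X_0}$-bi-Lipschitz trivial, and then applies the preceding Proposition to conclude the same for $F$. Your version merely phrases this as a contradiction rather than a contrapositive and spells out the hypothesis checks (freeness of $\Theta_{X_0}$, the two-generator degree condition) in more detail.
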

\begin{proof}
Notice that in this case $n-k=2$, so if $w_{n-1}>w_{n}$ the hypotheses of Theorem \ref{nvar} are satisfied. Therefore, if $\theta_0\neq cf_0$ for some constant $c$, then $F_0$ is not strongly rational $\mathscr R_{X_0}$-bi-Lipschitz trivial and by the above Proposition $F$ is not strongly rational $\mathscr R_{X}$-bi-Lipschitz trivial.
\end{proof}

The case in which the equations of the polar curve are not analytically reduced has to be treated separately and we can only prove the same result when $n=3$.

\begin{teo}\label{comps}
Suppose that $w_1\geq w_2>w_3$ and that $\deg(\eta_e)<\deg(\eta_i)$, $i=3,\ldots,r$. If $F:(\mathbb C^3\times \mathbb C,0) \to (\mathbb C,0) $ is strongly rational $\mathscr R_X$-bi-Lipschitz trivial, then $F$ is $\mathscr R_X$-analytically trivial. Moreover, if $\deg(\eta_e)<\deg(\eta_i)$, $i=2,\ldots,r$ and $F(x,t)=f(x)+t\theta(x)$, then $\theta=cf$, where $c$ is a constant.
\end{teo}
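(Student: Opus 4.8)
The plan is to run the proof of Theorem~\ref{nvar} with $n=3$ as far as it goes and then replace the one step that used analytic reducedness of the polar curve. Strong rational $\mathscr R_X$-bi-Lipschitz triviality gives a control function $\rho$ of some degree $2k$ and a Lipschitz field $V$ tangent to $X$ with $\frac{\partial F}{\partial t}=dF(V)$ and $\rho V=\alpha_1\eta_e+\dots+\alpha_r\eta_r$, so that
$$\rho\,\frac{\partial F}{\partial t}=\alpha_1 dF(\eta_e)+\dots+\alpha_r dF(\eta_r),\qquad \deg(\alpha_i)=2k-\epsilon_i\ (\text{or }\alpha_i=0),$$
where $\epsilon_i=\deg(\eta_i)$. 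For $n=3$ the polar curve is cut out by a single extra equation, $\Gamma=\{\varphi=0,\ dF(\eta_2)=0\}$, and I parameterise its branches by $\gamma_j(s)=(a_{1_j}(t)s^{w_1},a_{2_j}(t)s^{w_2},s^{w_3},t)$ exactly as before.

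The branch-wise Lipschitz estimate then transfers unchanged, since it only uses the reduced branches and the weight gap $w_2>w_3$: Lipschitzness of $V_3=v_3/\rho$ gives $|V_3(\gamma_j)-V_3(\gamma_l)|\lesssim |s|^{w_2}$, the degree count forces this difference to equal $|k_j(t)-k_l(t)|\,|s|^{w_3}$, and $w_2>w_3$ makes the branch constant $k_j(t)=k(t)$ branch-independent. Because $\deg(\eta_e)<\deg(\eta_i)$ for $i\ge 3$, only $\eta_e$ has a weight-$w_3$ term in the last slot, so $\alpha_1/\rho=k(t)/w_3$ on $\Gamma$, and the same $\rho$-irreducibility argument as in Theorem~\ref{nvar} annihilates the tail $\sum_{i\ge3}\frac{\alpha_i}{\rho}dF(\eta_i)$. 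Hence $P:=\frac{\partial F}{\partial t}-\frac{k(t)}{w_3}dF(\eta_e)$ vanishes identically on every branch of $\Gamma$.

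The hard part is the lifting step: in Theorem~\ref{nvar} reducedness let one pass from set-theoretic vanishing of $P$ on $\Gamma$ to the ideal membership $P\in\langle\varphi,dF(\eta_2)\rangle$, and this is precisely what fails here. My plan is to recover the missing multiplicities using that for $n=3$ the polar locus is a \emph{divisor} on the surface $X$: write $dF(\eta_2)|_X$ as a product of distinct irreducible branch-equations to powers $m_i$, so that $\langle\varphi,dF(\eta_2)\rangle$ is radical exactly when all $m_i=1$. On the normalisation of each branch (a smooth $s$-line) I would refine the Lipschitz estimate above, accounting for the $m_i$ colliding roots $(a_{1_j}(t),a_{2_j}(t))$ producing that branch, to show that $P$ vanishes there to order $m_i$ rather than merely to first order; this upgrades the vanishing to $P\in\langle\varphi,dF(\eta_2)\rangle$. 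From there the degree bookkeeping of Theorem~\ref{nvar} applies verbatim: on $X$ one gets $\frac{\partial F}{\partial t}-\frac{k(t)}{w_3}dF(\eta_e)=\lambda\, dF(\eta_2)$, where $\lambda$ is a constant (and the term lies in $dF(\Theta_X^0)$) if $\deg(\eta_2)=\deg(\eta_e)$, and vanishes by degree if $\deg(\eta_2)>\deg(\eta_e)$. Since $\eta_e\in\Theta_X^0$ too, this yields $\frac{\partial F}{\partial t}\in dF(\Theta_X^0)$, and Proposition~\ref{thomlevine} gives $\mathscr R_X$-analytic triviality.

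For the moreover part, $\deg(\eta_e)<\deg(\eta_i)$ for all $i\ge2$ leaves only the Euler term after the degree count, so the lifted identity reads $\frac{\partial F}{\partial t}=\frac{k(t)}{w_3}dF(\eta_e)=\frac{k(t)d}{w_3}\,F$ on $X$, using Euler's identity $dF(\eta_e)=d\,F$; with $F=f+t\theta$ this gives $\theta=\frac{k(t)d}{w_3}(f+t\theta)$, and evaluating at $t=0$ forces $\theta=cf$ with $c=k(0)d/w_3$. I expect the multiplicity estimate on the normalised branches to be the real obstacle, and it is exactly what confines the argument to $n=3$: there the polar locus is a hypersurface in the two-dimensional $X$, carrying one multiplicity per branch that a single one-parameter Lipschitz estimate can detect, whereas for $n\ge4$ the polar curve is a higher-codimension complete intersection for which no such per-branch control seems available.
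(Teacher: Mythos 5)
You correctly run the first half of the argument (the branch-wise Lipschitz estimate forcing $k_j(t)=k(t)$ and the $\rho$-irreducibility step killing the tail $\sum_{i\ge3}\frac{\alpha_i}{\rho}dF(\eta_i)$), and you correctly isolate the crux: passing from set-theoretic vanishing of $P=\frac{\partial F}{\partial t}-\frac{k(t)}{w_3}dF(\eta_e)$ on $\Gamma$ to the ideal membership $P\in\langle\varphi,dF(\eta_2)\rangle$ when $dF(\eta_2)|_X$ has repeated factors. But your proposed fix for that crux is a gap, not a proof. You plan to ``refine the Lipschitz estimate on the normalisation of each branch'' to show $P$ vanishes to order $m_i$, but (a) this is never carried out, and (b) as described it conflates two different notions of order: the estimate along a parameterised branch $\gamma_j(s)$ controls the order of vanishing of $P\circ\gamma_j$ in $s$, i.e.\ \emph{along} the branch, whereas membership of $P$ in $\langle\varphi,P_i^{m_i}\rangle$ requires vanishing to order $m_i$ \emph{transversally} to the branch inside $X$. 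A set-theoretically reduced branch carries no ``$m_i$ colliding roots'' for a one-parameter Lipschitz estimate to detect --- the multiplicity lives in the scheme structure of the polar divisor, not in the parameterisation --- so there is no reason the estimate you have sees it.

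The paper closes this gap by a purely algebraic device adapted from Fernandes--Ruas: write $dF(\eta_2)=\prod_i P_i^{\alpha_i}$ and, using your set-theoretic vanishing, $P=u\prod_i P_i^{\beta_i}$ on $X$ with $\beta_i\geq 1$ and $P_i\nmid u$; then apply the derivation $\eta_2$ to the identity $\frac{\partial F}{\partial t}=\frac{k(t)}{w_n}dF(\eta_e)+u_iP_i^{\beta_i}$. Since $\eta_2$ commutes with $\partial/\partial t$ and $dF(\eta_e)=d\cdot F$ by Euler's relation, one finds that $P_i^{\alpha_i-1}$ divides $P_i^{\beta_i-1}\left(\eta_2(u_i)P_i+\beta_iu_i\eta_2(P_i)\right)$, and since $P_i$ divides neither $u_i$ nor $\eta_2(P_i)$ this forces $\beta_i\geq\alpha_i$, which is exactly the ideal membership you need. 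You would have to supply this argument (or an actual proof of your multiplicity claim) for your write-up to stand; everything after that point, including the degree bookkeeping, the appeal to Proposition \ref{thomlevine}, and the ``moreover'' clause, is fine.
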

\begin{proof}
The proof is analogous to the proof of Theorem \ref{nvar} until we used the fact of the equations of the polar curve being analytically reduced. The only difference is that now the generic polar is given by $\Gamma=\{\varphi=0,dF(\eta_2)=0\}$. As above we obtain that $\frac{\partial F}{\partial t}-\frac{k(t)}{w_n}dF(\eta_e)=0$ on the polar curve. From here on we follow the steps of the proof of Theorem 3.4 in \cite{fernandesruas} adapted to the relative case. Let $P_1(x,t),\ldots,P_z(x,t)$ be analytic irreducible factors of $dF(\eta_2)$ in $\O_4$. Let $\alpha_i=\max\{\alpha:P_i^{\alpha} \text{ divides } dF(\eta_2) \text{ in } \O_4\}$. There exist positive integers $\beta_1,\ldots,\beta_z$ and a function $u(x,t)\in\O_4$ such that, on $X$, $$\frac{\partial F}{\partial t}=\frac{k(t)}{w_n}dF(\eta_e)+uP_1^{\beta_1}\ldots P_z^{\beta_z},$$ where $P_i$ does not divide $u$. Our objective is to show that $\alpha_i\leq \beta_i.$

Set $$u_i=uP_1^{\beta_1}\ldots \widehat{P_i^{\beta_i}}\ldots P_z^{\beta_z}.$$ We can now write $$\frac{\partial F}{\partial t}=\frac{k(t)}{w_n}dF(\eta_e)+u_iP_i^{\beta_i},$$ where $P_i$ does not divide $u_i$. Suppose $\alpha_i>1$ (otherwise $\beta_i\geq \alpha_i$ trivially). We can now apply $\eta_2$ to the previous equation. First notice that since $\eta_2$ does not involve the variable $t$, $\frac{\partial}{\partial t}$ and $\eta_2$ commute. On the other hand $dF(\eta_e)=d\cdot F$. Using this and the fact that $\eta_2$ is a derivation we obtain $$\frac{\partial }{\partial t}(dF(\eta_2))=\frac{k(t)}{w_n}d\cdot dF(\eta_2)+\eta_2(u_i)P_i^{\beta_i}+\beta_iu_iP_i^{\beta_i-1}\eta_2(P_i).$$ Now, $P_i^{\alpha_i}$ divides $dF(\eta_2)$, so $P_i^{\alpha_i-1}$ divides $\frac{\partial }{\partial t}(dF(\eta_2))$ and $\frac{k(t)d}{w_n}\cdot dF(\eta_2)$. Hence $P_i^{\alpha_i-1}$ divides $\eta_2(u_i)P_i^{\beta_i}+\beta_iu_iP_i^{\beta_i-1}\eta_2(P_i)=P_i^{\beta_i-1}(\eta_2(u_i)P_i+\beta_iu_i\eta_2(P_i)).$ Since $P_i$ does not divide $u_i$ or $\eta_2(P_i)$, we get $\beta_i\geq \alpha_i$.

Finally, we can now write $$\frac{\partial F}{\partial t}=\frac{k(t)}{w_n}dF(\eta_e)+q(x,t)dF(\eta_2),$$ where $q$ is an analytic function. Looking at the degrees of the different factors we conclude that $q=q(t)$ or $q=0$ depending on whether $\deg(\eta_2)=\deg(\eta_e)$ or not and we follow as in STEP 1 of the proof of Theorem \ref{nvar}.
\end{proof}

\begin{rem}\label{rem3}
In the case there exists $\eta_{i_0}\neq\eta_2$ such that $\deg(\eta_e)=\deg(\eta_{i_0})<\deg(\eta_j)$, $j\neq i_0$, as seen in STEP 2 of the proof of Theorem \ref{nvar}, we get
$$\frac{\partial F}{\partial t}-\frac{k(t)}{w_n}dF(\eta_e)=\frac{\alpha_{i_0}}{\rho}\frac{w'}{w_n}dF(\eta_{e})+\frac{\alpha_{i_0}}{\rho}dF(\eta_{i_0}).$$
Now, $\eta_2$ and $\eta_{i_0}$ may not commute, so in order for the proof of Theorem \ref{comps} to apply we would need $P_i^{\alpha_i-1}$ to divide $k'(t)\eta_2(dF(\eta_{i_0}))=k'(t)(\eta_{i_0}(dF(\eta_2))-[\eta_2,\eta_{i_0}](F))$. Since $\eta_{i_0}$ is a derivation, $P_i^{\alpha_i-1}$ divides the first factor, so we would need $P_i^{\alpha_i-1}$ to divide the Lie bracket $[\eta_2,\eta_{i_0}](F)$.
\end{rem}

\begin{coro}
Let $X=\varphi^{-1}(0)$ be an isolated hypersurface weighted homogeneous singularity in $\mathbb C^3$ of type $((w_1,w_2,w_3),D)$ with $w_1\geq w_2>w_3$ and suppose that $mult(\varphi)\geq 3$. Then we have rigidity of strong rational $\mathscr R_X$-bi-Lipschitz triviality.
\end{coro}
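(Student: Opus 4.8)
The plan is to deduce the statement directly from Theorem \ref{comps}: it suffices to show that, for an isolated weighted homogeneous hypersurface $X=\varphi^{-1}(0)\subset\C^3$ with $w_1\geq w_2>w_3$ and $\operatorname{mult}(\varphi)\geq 3$, one can choose weighted homogeneous generators of $\Theta_X$ satisfying the degree hypothesis $\deg(\eta_e)<\deg(\eta_i)$ for the non-polar generators. First I would record the structure of $\Theta_X$. Since $\varphi$ has an isolated singularity, the partials $\partial\varphi/\partial x_1,\partial\varphi/\partial x_2,\partial\varphi/\partial x_3$ form a regular sequence, so every syzygy of the gradient is Koszul; together with the Euler relation $\eta_e(\varphi)=D\varphi$ (which lets one write any $\xi$ with $\xi(\varphi)=h\varphi$ as $\frac{h}{D}\eta_e$ plus a syzygy), this gives that $\Theta_X$ is generated by the Euler field $\eta_e$ and the three fields
\[
\eta_{ij}=\frac{\partial\varphi}{\partial x_j}\frac{\partial}{\partial x_i}-\frac{\partial\varphi}{\partial x_i}\frac{\partial}{\partial x_j},\qquad 1\leq i<j\leq 3.
\]
These are weighted homogeneous with $\deg(\eta_e)=0$ and $\deg(\eta_{ij})=D-w_i-w_j$, and since $w_1+w_2\geq w_1+w_3\geq w_2+w_3$ we have $\deg(\eta_{12})\leq\deg(\eta_{13})\leq\deg(\eta_{23})$.

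The heart of the argument is to show that $\operatorname{mult}(\varphi)\geq 3$ forces $D>w_1+w_2$, so that even the smallest degree $\deg(\eta_{12})$ is positive. For this I would invoke the standard necessary condition for an isolated weighted homogeneous singularity: for each variable $x_i$, the polynomial $\varphi$ must contain a monomial of the form $x_i^{a_i}$ or $x_i^{a_i}x_j$ with $j\neq i$. One proves this by restricting the gradient to the $x_i$-axis $L_i=\{x_k=0:k\neq i\}$; a monomial survives in $\partial\varphi/\partial x_i|_{L_i}$ only if it is a pure power $x_i^{a_i}$, and in $\partial\varphi/\partial x_k|_{L_i}$ (for $k\neq i$) only if it is $x_i^{a_i}x_k$, so if neither type occurs the whole gradient vanishes along $L_i$, contradicting isolatedness. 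Applying this to $x_1$ and using $\operatorname{mult}(\varphi)\geq 3$ gives $a_1\geq 3$ in the pure-power case (so $D=a_1w_1\geq 3w_1$) and $a_1\geq 2$ in the mixed case (so $D=a_1w_1+w_j\geq 2w_1+w_j$); since $w_1\geq w_2$ and $w_3>0$, both cases yield $D>w_1+w_2$.

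With $D>w_1+w_2$ established, all three fields $\eta_{12},\eta_{13},\eta_{23}$ have strictly positive degree, so ordering $\eta_1=\eta_e$, $\eta_2=\eta_{12}$, $\eta_3=\eta_{13}$, $\eta_4=\eta_{23}$ gives $\deg(\eta_e)<\deg(\eta_i)$ for all $i\geq 2$, in particular for $i\geq 3$, which is exactly what Theorem \ref{comps} requires. The remaining hypotheses, namely $w_1\geq w_2>w_3$ and the standing genericity assumptions on the polar curve $\Gamma=\{\varphi=0,\,dF(\eta_2)=0\}$ (not contained in $F^{-1}(0)$, with no branch in $\{\varphi=0,x_3=0\}$), belong to the ambient setting preceding Theorem \ref{nvar} and can be arranged by a generic choice of coordinates. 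Thus, for any strongly rational $\mathscr R_X$-bi-Lipschitz trivial deformation $F$, Theorem \ref{comps} gives that $F$ is $\mathscr R_X$-analytically trivial; as this holds for every such $F$, we obtain the asserted rigidity.

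I expect the main obstacle to be the weighted-degree inequality $D>w_1+w_2$: this is precisely where $\operatorname{mult}(\varphi)\geq 3$ enters, and it must be coupled with the isolated-singularity monomial lemma, since the multiplicity bound applied to an arbitrary monomial (possibly not involving $x_1$) does not by itself control the contribution of the heaviest weight $w_1$. A secondary point needing care is the justification that $\Theta_X$ requires no generators beyond $\eta_e$ and the $\eta_{ij}$, and that the choice $\eta_2=\eta_{12}$ yields an admissible generic polar curve in the sense used in Section \ref{rigid}.
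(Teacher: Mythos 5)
Your proposal is correct and follows essentially the same route as the paper: identify $\Theta_X=\langle\eta_e,\eta_{12},\eta_{13},\eta_{23}\rangle$ with $\deg(\eta_{ij})=D-(w_i+w_j)$, use $\operatorname{mult}(\varphi)\geq 3$ to force these degrees to be strictly positive, and apply Theorem \ref{comps}. The only difference is cosmetic: the paper obtains $\deg(\eta_{12})>\deg(\eta_e)$ by the quick contrapositive ($D=w_1+w_2$ would force the degree-two monomial $x_1x_2$ in $\varphi$), whereas you derive $D>w_1+w_2$ directly from the standard monomial criterion for isolated weighted homogeneous singularities --- a slightly longer but more self-contained version of the same step.
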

\begin{proof}
If $X$ has isolated singularity, then we have that $\Theta_X=\langle\eta_e,\eta_{12},\eta_{13},\eta_{32}\rangle$, where $\eta_e$ is the Euler vector field of degree 0 and $\eta_{ij}=\frac{\partial \varphi}{\partial x_j}\frac{\partial}{\partial x_i}-\frac{\partial \varphi}{\partial x_i}\frac{\partial}{\partial x_j}$ are the Hamiltonian vector fields of degree $D-(w_i+w_j)$ (see \cite{bruceroberts}, for example). Therefore $\deg(\eta_e)\leq \deg(\eta_{12})<\deg(\eta_{13})\leq \deg(\eta_{23}).$

Suppose $\deg(\eta_{12})=\deg(\eta_e)=0$, then $D=w_1+w_2$ and so $\varphi=x_1x_2+\tilde\varphi$. Therefore $mult(\varphi)\geq 3$ implies that $\deg(\eta_{12})>\deg(\eta_e)$ and we can apply Theorem \ref{comps}.
\end{proof}

\begin{ex}
Consider the cross-cap $X\subset\mathbb C^3$ given by $\varphi(u,v,w)=v^2-u^2w$. Consider the filtration $((1,2,2),4)$. $\Theta_X$ is generated by
\begin{align*}
\eta_{e}=&u\frac{\partial}{\partial u}+2v\frac{\partial}{\partial v}+2w\frac{\partial}{\partial w},\\
\eta_{2}=&u\frac{\partial}{\partial u}+v\frac{\partial}{\partial v},\\
\eta_{3}=&u^2\frac{\partial}{\partial v}+2v\frac{\partial}{\partial w},\\
\eta_{4}=&v\frac{\partial}{\partial u}+uw\frac{\partial}{\partial v}.
\end{align*}
Here $\deg(\eta_{e})=\deg(\eta_2)=\deg(\eta_3)=0<1=\deg(\eta_4)$, so we can use Theorem \ref{nvar}. Consider the family $F=u^6+v^3+w^3+t\theta$ with filtration $((1,2,2),6)$ and take $\Gamma=\{\varphi=dF(\eta_2)=0\}$ as a generic polar curve. We have that $dF(\eta_2)=6u^6+3v^3+tu\frac{\partial\theta}{\partial u}+tv\frac{\partial\theta}{\partial v}$, which is reduced, so $\alpha_1$ in Theorem \ref{comps} is 1. Therefore, we have that $P_1^{\alpha_1-1}=1$ and it divides $[\eta_2,\eta_{3}](F)$, and following Remark \ref{rem3} we have rigidity. In other words, any $\theta$ of weighted degree 6 for which $F$ is strongly rational $\mathscr R_X$-bi-Lipschitz trivial will be a combination of $dF(\eta_e),dF(\eta_2)$ and $dF(\eta_3)$ and will therefore be $\mathscr R_X$-analytically trivial too.

Consider now the same example but with the filtration $((2,3,2),6)$. We must consider $\eta_e'=2u\frac{\partial}{\partial u}+3v\frac{\partial}{\partial v}+2w\frac{\partial}{\partial w}$, instead of $\eta_e$. Here the lowest weight is not unique and so we cannot apply our results. Consider the family $F=u^3+v^2+w^3+tu^2w$ with filtration $((2,3,2),6)$. It is reasonable to expect this to be an example of strongly rational $\mathscr R_X$-bi-Lipschitz trivial which is not $\mathscr R_X$-analytically trivial. However, suppose it is strongly rational $\mathscr R_X$-bi-Lipschitz trivial with a control function $\rho=|u|^{6r}+|v|^{4r}+|w|^{6r}$ of degree $12r$ and a real analytic vector field $\vec{v}=(v_1,v_2,v_3)$ tangent to $X$ such that $\rho u^2w=v_1\frac{\partial f}{\partial u}+v_2\frac{\partial f}{\partial v}+v_3\frac{\partial f}{\partial w}$ (taking $t=0$ in $\rho\theta=dF(\vec{v}_t)$). This equation has degree $12r+6$ and hence $\deg(v_1)=\deg(v_3)=12r+2$ and $\deg(v_2)=12r+3$. If we want $\frac{\vec{v}}{\rho}$ to be Lipschitz, then the derivative of $\frac{v_1}{\rho}$ with respect to $v$ must be bounded (in particular). Differentiating the quotient we get $$\frac{\frac{\partial v_1}{\partial v}\rho-v_1\frac{\partial \rho}{\partial v}}{\rho^2}.$$ The numerator is a polynomial of degree $24r-1$ and the denominator has degree $24r$, so necessarily this quotient must be 0. Solving the differential equation given by $\frac{\partial v_1}{\partial v}\rho-v_1\frac{\partial \rho}{\partial v}=0$ we get $v_1(u,\overline{u},v,\overline{v},w,\overline{w})=k_1(u,\overline{u},w,\overline{w})\rho(u,\overline{u},v,\overline{v},w,\overline{w})$ or 0. Comparing degrees, $\deg(k_1)=2$, so it must be a linear function in $u,\overline{u},w,\overline{w}$. Similarly $v_3(u,\overline{u},v,\overline{v},w,\overline{w})=k_3(u,\overline{u},w,\overline{w})\rho(u,\overline{u},v,\overline{v},w,\overline{w})$, where $k_3$ is a linear function in $u,\overline{u},w,\overline{w}$ or 0.

Suppose $k_1$ has a factor in $w$, then $v_1=k_1\rho$ would contain the expression $u^{3r}\overline{u}^{3r}w+v^{2r}\overline{v}^{2r}w+w^{3r+1}\overline{w}^{3r}$, but there is no vector in $\Theta_X$ with a pure power of $w$ in the first component. We get the same contradiction supposing $k_1$ has a factor in $\overline{w}$.

Suppose $k_1$ has a factor in $u$, then $v_1=k_1\rho$ would contain the expression $u^{3r+1}\overline{u}^{3r}+uv^{2r}\overline{v}^{2r}+uw^{3r}\overline{w}^3r$. However, \begin{align*}
u^{3r+2}\overline{u}^{3r}w+u^2v^{2r}\overline{v}^{2r}w+u^2w^{3r+1}\overline{w}^{3r}=\rho\theta=v_1\frac{\partial f}{\partial u}+v_2\frac{\partial f}{\partial v}+v_3\frac{\partial f}{\partial w}=\\(u^{3r+1}\overline{u}^{3r}+uv^{2r}\overline{v}^{2r}+uw^{3r}\overline{w}^3r+\tilde{k_1})3u^2+v_22v+v_33w^2,
\end{align*}
and there is no way to cancel the term $3u^{3r+3}\overline{u}^{3r}$ from this equation. We get the same contradiction if we suppose that $k_1$ has a factor in $\overline{u}$. This means that $v_1=0$. Similarly $v_3=0$. Now the equation $\rho\theta=u^{3r+2}\overline{u}^{3r}w+u^2v^{2r}\overline{v}^{2r}w+u^2w^{3r+1}\overline{w}^{3r}=v_1\frac{\partial f}{\partial u}+v_2\frac{\partial f}{\partial v}+v_3\frac{\partial f}{\partial w}=v_22v$ has no solution outside $X$. Even restricting ourselves to $X$, and changing $v^2$ for $u^2w$, we get $\vec{v}=(0,1/2\rho v,0)$, which is not tangent to $X$ for any $r$.

In conclusion, with such a control function it is not possible to have strongly rational $\mathscr R_X$-bi-Lipschitz triviality. We have tried using control functions with different filtrations but we have not succeeded in finding a weighted homogeneous example of strongly rational $\mathscr R_X$-bi-Lipschitz trivial family which is not $\mathscr R_X$-analytically trivial.
\end{ex}

\end{document}